\documentclass[11pt]{article}
\usepackage{graphicx}
\usepackage[a4paper,margin=1in]{geometry}
\usepackage[T1]{fontenc}
\usepackage[utf8]{inputenc}
\usepackage{lmodern}
\usepackage{microtype}
\usepackage{subcaption}
\usepackage{epigraph}
\usepackage{ragged2e}
\usepackage{amsmath,amssymb,amsthm,mathtools, centernot}
\usepackage{bbm}
\usepackage{enumitem}
\usepackage{hyperref}
\usepackage[nameinlink,capitalize]{cleveref}

\hypersetup{
    colorlinks=true,
    linkcolor=blue,
    citecolor=blue,
    urlcolor=blue
}

\theoremstyle{plain}
\newtheorem{theorem}{Theorem}[section]
\newtheorem{proposition}[theorem]{Proposition}
\newtheorem{lemma}[theorem]{Lemma}
\newtheorem{corollary}[theorem]{Corollary}

\theoremstyle{definition}

\newtheorem{assumption}[theorem]{Assumption}

\theoremstyle{remark}
\newtheorem{remark}[theorem]{Remark}

\title{On the asymptotic behaviour of $\alpha$-divergence generalized posteriors}
\author{
    Ioar Casado-Telletxea\\
    Centre for AI Fundamentals, University of Manchester\\
    \texttt{ioar.casadotelletxea@manchester.ac.uk}
    \and
    Omar Rivasplata\\
    Centre for AI Fundamentals, University of Manchester\\
    \texttt{omar.rivasplata@manchester.ac.uk}
}

\date{\today}

\usepackage{xspace}
\usepackage[colorinlistoftodos
,color=blue!30!white
]{todonotes}

\begin{document}

\maketitle

\begin{abstract}
    We study the asymptotic behaviour of the solutions to the unconstrained generalized variational inference problem regularized by an $\alpha$-divergence.
    Under mild conditions, the minimizer is explicit up to a scalar normalizing constant, and we show that it admits a rescaling under which it converges in $L^1$ to a fixed non-Gaussian law, at a concentration scale around the empirical loss minimizer that departs from the typical $n^{-1/2}$ in the classical Bernstein--von Mises theorem. For $\alpha\in(0,1)$ and $d<2/(1-\alpha)$ the limit is a Student-$t$ distribution and the posterior width shrinks faster than the sampling error of the empirical minimizer. Conversely, for $\alpha>1$ the limiting distribution is a compactly supported Pearson type-II distribution, and the posterior width shrinks more slowly than the sampling error. The KL divergence ($\alpha=1$) is the unique member of the $\alpha$-divergence family whose posterior width matches the sampling scale and whose limit is Gaussian.
    We discuss the implications of these qualitatively different regimes for the frequentist coverage of credible regions, and a toy example on intercept-only logistic regression illustrates the theoretical claims.
\end{abstract}

\vskip 1cm

\tableofcontents

\section{Introduction}

\begingroup
\setlength{\epigraphwidth}{0.80\textwidth}
\setlength{\epigraphrule}{0pt}
\setlength{\beforeepigraphskip}{1.5\baselineskip}
\setlength{\afterepigraphskip}{2\baselineskip}
\renewcommand{\epigraphsize}{\small}

\vskip -0.1cm

\epigraph{\justifying%
    \itshape
    Studying only the model theory of first-order logic would be
    analogous to the study of real analysis never knowing of any
    but the polynomial functions: core concepts like continuity,
    differentiability, analyticity and their relations would remain
    at best vaguely perceived. It is only in the study of more general
    functions that one sees the importance of these notions, and
    their different roles, even for the simplest case.
}{%
    \footnotesize
    \textsc{Jon Barwise},
    \textit{Model-theoretic logics}, 1985, \cite{barwise1985background}
}
\endgroup

\vskip -0.5cm

Since their introduction more than 250 years ago \cite{bayes1763, laplace1774memoire}, Bayes' rule and Bayesian inference have been cornerstones of modern probability and statistics. Automatic uncertainty quantification and the flexibility provided by the choice of prior have proved to be attractive properties of the Bayesian perspective, and virtually every field of contemporary statistics and machine learning has deep connections with Bayesian inference \cite{ghavamzadeh2015bayesian, wang2020survey, papamarkou24b}. At the same time, advances in the practical applications of classical Bayesian inference have highlighted its limitations. These include (but are not limited to) the sensitivity to prior and likelihood misspecification, and the difficulty of approximating the Bayesian posterior in large-scale models (see \cite{knoblauch2022optimization} and the references therein). 

Fuelled by these limitations, the Bayesian community has developed novel mechanisms that extend the scope of traditional Bayesian inference and generalize Bayesian posteriors. Those generalizations include tempered or cold posteriors \cite{grunwald2012safe, holmes2017assigning,zhang2024cold,harvey2025learning}, Gibbs posteriors, or more generally, posteriors resulting from minimizing PAC-Bayes bounds \cite{catoni2007pac, bissiri2016general, germain2016pac, syring2017gibbs, casado2024pac}, and posteriors obtained through variational methods \cite{beal2003variational, hernandez2016black, blei2017variational}. Each family of generalized posteriors carries out different modifications of the standard Bayesian posterior, while also addressing different limitations. For example, tempered posteriors include a \textit{temperature} parameter that emphasizes the data (or conversely, the prior) in order to mitigate the effects of model misspecification; Gibbs posteriors substitute the negative log-likelihood with an arbitrary loss function that can provide more flexibility and robustness; and variational inference methods provide tractable approximations of the Bayesian posterior by restricting the approximation to a previously established family of distributions  and optimizing its divergence from the posterior.

These departures from classical Bayesian inference have prompted renewed interest in the asymptotic and finite-sample behaviour of generalized posteriors.
In this paper, we study the generalized posteriors obtained by replacing the Kullback--Leibler (KL) divergence with an $\alpha$-divergence in the variational formulation of Bayesian inference \cite{knoblauch2022optimization}. We consider optimization over unrestricted distributions (absolutely continuous with respect to the prior), thereby isolating the effect of the divergence from that of a restricted variational family. Importantly, under suitable regularity conditions, we provide Bernstein--von Mises-style theorems for these generalized posteriors.

Our results reveal two different regimes. For fixed regularization strength, the $\alpha\in(0,1)$ regime yields Student-$t$ limits and posterior widths around the empirical loss minimizer shrinking faster than $n^{-1/2}$, where $n$ is the sample size; whereas $\alpha>1$ yields compactly supported Pearson type-II limits and widths shrinking more slowly. When the empirical minimizer has the typical $\sqrt{n}$ sampling fluctuations around the population minimizer, this mismatch leads to asymptotic undercoverage and overcoverage, respectively, for the credible regions considered here. The KL case $(\alpha= 1)$ separates these regimes and is the unique member of the $\alpha$-divergence family with the usual posterior width exponent and a Gaussian limit. Thus, by studying a broader class of posteriors we are able to put into perspective the specific role played by KL divergence regularization and Gaussian approximations in classical Bayesian inference, along with their implications for the frequentist calibration of credible regions. 

\subsection{Contributions and related work}

\begin{figure}[t]
    \centering

    \begin{subfigure}[t]{0.49\linewidth}
        \centering
        \includegraphics[width=\linewidth]{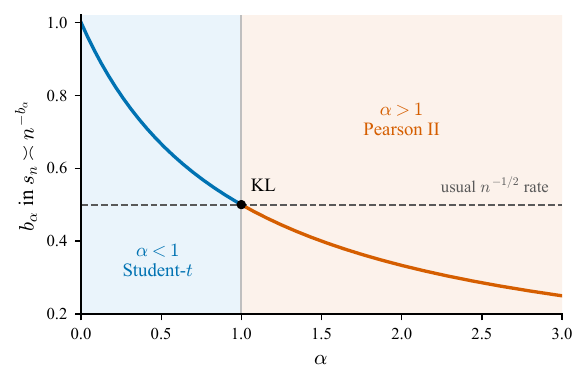}
        \caption{Scale at which each generalized posterior concentrates around the empirical minimizer ($b_\alpha$ in Proposition \ref{prop:alpha_to_one}) as a function of $\alpha$ for $d=1$.}
        \label{fig:first}
    \end{subfigure}
    \hfill
    \begin{subfigure}[t]{0.49\linewidth}
        \centering
        \includegraphics[width=\linewidth]{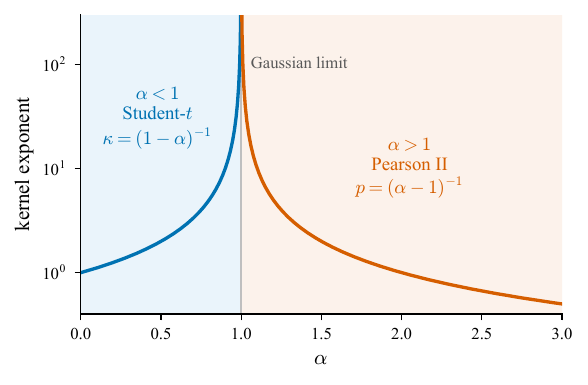}
        \caption{Kernel exponent for the limiting Student-$t$ distribution ($\alpha<1$) and Pearson type-II distribution ($\alpha>1$).}
        \label{fig:second}
    \end{subfigure}

    \caption{For $\alpha$-divergence regularized generalized variational inference, $\alpha=1$ separates two qualitatively different asymptotic regimes, both in terms of local shape and concentration exponent.}
    \label{fig:summary_combined}
\end{figure}

The asymptotic analysis of Bayesian methods can be traced back to Laplace \cite{laplace1774memoire, stigler1986laplace}, but the first \textit{modern} consistency results are those of \cite{doob1949application} and \cite{schwartz1965bayes}.
Beyond consistency, a finer description is provided by the Bernstein--von Mises (BvM) theorem \cite{bernstein1917probability, vonmises1919fundamentalsaetze,lecam1953asymptotic}, which also links Bayesian credible regions with frequentist confidence regions. This theorem states that in regular, correctly specified parametric models, the posterior is asymptotically normal under $\sqrt{n}$ rescaling, with covariance given by the inverse Fisher information \cite[Chapter~10]{van2000asymptotic}. Under misspecification, the posterior remains asymptotically normal at the same rate, but its covariance no longer matches the sampling covariance of its centring \cite{kleijn2012bernstein}, and credible regions lose their frequentist interpretation \cite{muller2013risk}.
Analogous results extend beyond likelihood-based posteriors: see \cite{miller2021asymptotic} and the references therein. 

Concentration properties have been established for Gibbs-type posteriors \cite{martin2022direct,syring2023gibbs} and for
tempered or fractional posteriors \cite{bhattacharya2019fractional,
alquier2020}. Since the covariance of a Gibbs posterior is generally miscalibrated, a related line of work tunes the learning rate to restore frequentist coverage \cite{grunwald2012safe, holmes2017assigning,
syring2019calibrating, lyddon2019general}.
Variational approximations have been analysed along similar lines. For KL-based variational inference, \cite{wang2019frequentist} proved a variational
BvM theorem and \cite{zhang2020convergence} derived contraction rates, while \cite{yang2020alpha} studied variational approximations to fractional
posteriors. The $\alpha$-divergences enter variational
inference primarily as the objective measuring the discrepancy between an approximating family and the posterior \cite{minka2005divergence,
li2016renyi, hernandez2016black}, and the asymptotics of the resulting approximations have been studied in \cite{jaiswal2020asymptotic,
bertholom2024asymptotics}. Our setting is different: here the $\alpha$-divergence acts as the regularizer of a Generalized Variational Inference (GVI) problem in the sense of \cite{knoblauch2022optimization}, where the variational problem is posed over all probability measures on $\Theta\subseteq \mathbb{R}^d$ that are absolutely continuous with respect to the prior, rather than over a parametric family.

Consistency of GVI posteriors was established in \cite{knoblauch2019frequentist}, and finite-sample convergence rates were recently derived in \cite{mildner2025rates} under the assumption of bounded divergences. Closed-form solutions for $f$-divergence regularizers are available through convex duality \cite{alquier2021non, daunas2024equivalence, daunas2026empirical,nguyen2026closed}, but their
asymptotic behaviour has not been characterized fully: BvM-type results are lacking for regularizers beyond the KL divergence. 
We address this gap for the $\alpha$-divergence family, and show that, unlike in the KL case, both the limiting shape and the concentration scale of the generalized posterior change.

More specifically, our main contributions are as follows:

\begin{enumerate}
    \item \textbf{Local limiting distributions and concentration scales.}
    Under suitable local regularity, identifiability, and prior
    assumptions similar to those in \cite{miller2021asymptotic}, we establish convergence in $L^1$ (hence in total variation) of the appropriately centred and rescaled posteriors.
    For $\alpha\in(0,1)$ with $2/(1-\alpha)>d$, the limiting distribution is of Student-$t$ type; for $\alpha>1$, it is a compactly supported Pearson type-II distribution. In both regimes, the posterior width around the empirical loss minimizer is of order

    \begin{equation}
        s_n\asymp n^{-b_\alpha},
        \qquad
        b_\alpha=\frac{1}{2+d(\alpha-1)}.
    \end{equation}
    with an explicit constant that, unlike in the KL case, depends on the prior density at the population minimizer. Thus, changing the divergence affects both the local limiting shape and the concentration exponent.

    \item \textbf{Consequences for uncertainty quantification.}
    When the empirical loss minimizer has nondegenerate
    $\sqrt{n}$ sampling fluctuations, the posterior width and the sampling uncertainty shrink at different rates.
    We show that this mismatch produces frequentist coverage tending to zero for $\alpha\in(0,1)$ and to one for $\alpha>1$, for the classes of credible regions specified in our results, which include the equal-tailed intervals used in practice.

    \item \textbf{Connection with the KL regime.}
    Both families of limiting densities converge in $L^1$ to the Gaussian law as $\alpha\rightarrow1$, while $b_\alpha\rightarrow1/2$.
    This identifies KL as the unique member of the $\alpha$-divergence family with the usual posterior width exponent.
\end{enumerate}

Section \ref{sec:prelim} introduces the generalized variational inference problem and the asymptotic analysis tools used. Section \ref{section:main} presents the main results: Subsections \ref{subsection:<1} and \ref{subsec:alph>1} establish the local limiting distributions, concentration scales, and coverage consequences for $\alpha\in(0,1)$ and $\alpha>1$,
respectively, while Subsection \ref{subsec:KL} describes their connection with the Gaussian limit as $\alpha\rightarrow 1$. Section \ref{section:example} illustrates these results in an intercept-only logistic regression model.
Section \ref{sec:concl} concludes with the limitations of the present analysis and directions for future work. All the proofs are contained in Appendix \ref{Appendix}.

\section{Preliminaries}\label{sec:prelim}

\subsection{Generalized variational inference}

To the best of our knowledge, the most systematic taxonomy of generalized posteriors is provided by \cite{knoblauch2022optimization}. We follow their terminology throughout. Probability measures that are absolutely continuous with respect to Lebesgue measure are identified with their densities whenever convenient.

Let $\Theta\subseteq\mathbb{R}^d$ be a parameter space, let $\Pi$ be a prior distribution on $\Theta$ with density $\pi$, and let $X_1,\ldots,X_n$ be i.i.d.\ observations with distribution $\mathbb{P}$ on a Euclidean space $\mathcal{X}$. Given a loss function
\begin{equation}
    \ell:\Theta\times\mathcal{X}\to\mathbb{R},
\end{equation}
we define the empirical and population losses, respectively, by
\begin{equation}
\ell_n(\theta):=\frac{1}{n}\sum_{i=1}^n\ell(\theta,X_i),\qquad\ell_{\mathbb{P}}(\theta):=\mathbb{E}_{X\sim\mathbb{P}}[\ell(\theta,X)].
\end{equation}
We denote by $\theta_0$ the minimizer of the population loss $\ell_{\mathbb{P}}$, assumed to be unique, and, with a slight abuse of notation, we also write $\mathbb{P}$ for probabilities computed under the sampling distribution of the data.

Following \cite{knoblauch2022optimization}, a generalized posterior is obtained as the solution of a Generalized Variational Inference (GVI) problem of the form
\begin{equation}\label{eq:gvi-general}
Q_n^\star\in\underset{Q\in\mathcal{Q}}{\arg\min}\left\{\int_\Theta \ell_n(\theta)dQ+\frac{\eta D(Q\|\Pi)}{n}\right\},
\end{equation}
where $\mathcal{Q}\subseteq\mathcal{P}(\Theta)$ is a class of probability measures on $\Theta$, $D$ is a divergence on $\mathcal{P}(\Theta)$, and $\eta>0$ controls the relative weight of the divergence term.

\begin{remark}
Observe that the formulation in \eqref{eq:gvi-general} encompasses several standard constructions.

\begin{enumerate}
    \item[(a)] Suppose that $\mathcal{Q}=\mathcal{P}(\Theta)$ and
    $\ell(\theta,x)=-\log p(x\mid\theta)$,
    for some likelihood $p(x\mid\theta)$. Consider also the KL divergence $D=D_{\mathrm{KL}}$, and $\eta=1$. Then \eqref{eq:gvi-general} yields the ordinary Bayesian posterior,
    \[
    \frac{dQ_n^\star}{d\theta}(\theta)
    \propto
    \left\{\prod_{i=1}^n p(X_i\mid\theta)\right\}\pi(\theta).
    \]

    \item[(b)] Under the same choices of loss and divergence, but with
    \[
    \mathcal{Q}\subsetneq\mathcal{P}(\Theta)
    \]
    a restricted, typically parametric, family, \eqref{eq:gvi-general} corresponds to variational inference.

    \item[(c)] If $\mathcal{Q}=\mathcal{P}(\Theta)$, $D=D_{\mathrm{KL}}$, $\eta>0$, and $\ell$ is an arbitrary loss, then the solution is the Gibbs posterior
    \[
    \frac{dQ_n^\star}{d\theta}(\theta)
    \propto
    \exp\left\{-\frac{n}{\eta}\ell_n(\theta)\right\}\pi(\theta).
    \]
\end{enumerate}
\end{remark}

Given this extensive family of novel belief distributions, a natural first question concerns their asymptotic properties.

\subsection{Asymptotic analysis}

Two of the central questions in the asymptotic study of generalized posteriors are consistency and Bernstein--von Mises (BvM) behaviour. Following the terminology of \cite{miller2021asymptotic}, a sequence of generalized posteriors $Q_n^\star$ is consistent at $\theta_0$ if, for every $\varepsilon>0$,
\begin{equation}
Q_n^\star\left(\|\theta-\theta_0\|>\varepsilon\right)
\longrightarrow 0,
\end{equation}
when $n\rightarrow \infty$. Consistency therefore describes concentration of the posterior around the population target, but does not determine either the rate of concentration or the local shape of the posterior.

A BvM theorem provides this finer local description. More precisely, let $\theta_n\to\theta_0$ be a sequence of centring points, which will usually be maximum likelihood estimators or empirical risk minimizers, and let $s_n\to0$ be a concentration scale. If $q_n$ denotes the density of
\begin{equation}
\frac{\theta-\theta_n}{s_n},
\qquad \theta\sim Q_n^\star,
\end{equation}

\noindent a BvM-type result establishes convergence, typically in $L^1$ or total variation,
\begin{equation}
    q_n\longrightarrow q,
\end{equation}

\noindent for some non-degenerate limiting density $q$. Furthermore, observe that consistency itself can also be derived from a BvM theorem. For ordinary Bayesian and Gibbs posteriors under standard regularity conditions, the classical BvM theorem provides $s_n\asymp n^{-1/2}$ and $q$ is Gaussian \cite{van2000asymptotic, miller2021asymptotic}. Our results below show that for $\alpha$-divergence generalized posteriors both features may change: the limiting law need not be Gaussian and the concentration scale need not be $n^{-1/2}$. This will have important consequences for the frequentist calibration of credible regions.

We now make those notions precise. Fix a nominal level $1-\beta\in(0,1)$.
An asymptotically credible sequence of regions is a sequence of data-dependent sets $C_n$ whose posterior
probability satisfies
\begin{equation}
    Q_n^\star(C_n)\longrightarrow 1-\beta
\end{equation}
in probability under the sampling distribution. The frequentist
coverage of $C_n$, however, is the probability that it contains the fixed
population target over repeated realizations of the data: $\mathbb{P}(\theta_0\in C_n)$. More specifically, we say credible regions are asymptotically calibrated at level $1-\beta$
if
\begin{equation}
    \mathbb{P}(\theta_0\in C_n)
    \longrightarrow 1-\beta.
\end{equation}
In this case, they also constitute asymptotically exact frequentist confidence regions.

\section{Main results}\label{section:main}

The variational problem in (\ref{eq:gvi-general}) has been studied for the case of $f$-divergences using techniques from convex duality \cite{alquier2021non,daunas2024equivalence, daunas2026empirical, nguyen2026closed}. More specifically, we will focus on the broad subfamily of $\alpha$-divergences (\ref{eq:alpha-div}), which contains the KL and reverse KL divergences as limiting cases, as well as multiples of the squared Hellinger distance and of the Pearson $\chi^2$ divergence.

In the remaining we set $\Theta=\mathbb R^d$. Consider the unconstrained Generalized Variational Inference (GVI) problem

\begin{equation}\label{eq:GVI_problem}
    Q^\star_n\in \underset{Q\in \mathcal{P}(\mathbb{R}^d)}{\arg\min}\left[ \int_{\mathbb{R}^d} \ell_n(\theta)dQ + \frac{\eta D_\alpha(Q\|\Pi)}{n} \right],
\end{equation}

where $\eta>0$ and for $\alpha\neq 0,1$,

\begin{equation}\label{eq:alpha-div}
D_\alpha(Q\|\Pi)
=\int_{\mathbb{R}^d}f_\alpha\left(\frac{dQ}{d\Pi}\right)d\Pi,\quad\text{with}\quad 
f_\alpha(t)=\frac{t^\alpha-\alpha t+\alpha-1}{\alpha(\alpha-1)}.
\end{equation}

Note that $\alpha\rightarrow 1$ recovers the KL divergence, while $\alpha\rightarrow 0$ converges to the reverse KL divergence. For $\alpha=1/2$, $f_{1/2}(t)=2(\sqrt{t}-1)^2$, so $D_{1/2}(Q\|\Pi)=2\int(\sqrt{dQ}-\sqrt{d\Pi})^2$ is four times the squared Hellinger distance $H^2(Q,\Pi)=\frac{1}{2}\int(\sqrt{dQ}-\sqrt{d\Pi})^2$; for $\alpha=2$, $D_2(Q\|\Pi)=\frac{1}{2}\chi^2(Q\|\Pi)$. 

For simplicity, we restrict (\ref{eq:GVI_problem}) to measures $Q\ll\Pi$, that is, we use the convention $D_\alpha(Q\|\Pi)=+\infty$ when $Q\centernot{\ll}\Pi$. For $\alpha>1$ this agrees with the standard definition of $f$-divergences, because $f_\alpha(t)/t\to\infty$ as $t\to\infty$. For $\alpha\in(0,1)$, however, $f_\alpha(t)/t\to1/(1-\alpha)<\infty$; in fact, $D_\alpha(Q\|\Pi)\leq1/(\alpha(1-\alpha))$ for every $Q\ll\Pi$, and the standard definition would charge the part of $Q$ that is singular with respect to $\Pi$ only the finite amount $Q^\perp(\mathbb{R}^d)/(1-\alpha)$. The restriction is therefore a genuine constraint only when $\alpha\in(0,1)$. 

Under mild assumptions on $\ell$, the solution of (\ref{eq:GVI_problem}) is explicit up to a scalar normalizing constant. Throughout the remainder, we impose assumptions on $\ell_n$ and $\Pi$ analogous to those of Theorem 4 in \cite{miller2021asymptotic}. Note that the assumptions and local-limit results below are formulated for deterministic sequences of loss functions. When the losses depend on random observations, we assume that these conditions hold on a common event of sampling probability one, and in this setting, convergence of the rescaled posterior densities and their posterior probabilities
is understood almost surely. See the discussion in Section 8.8 of \cite{miller2021asymptotic}.

\begin{assumption}[Locally quadratic loss]\label{ass:quadratic}
    There exist $\theta_n\rightarrow\theta_0$, symmetric matrices $H_n\rightarrow H_0$, with $H_0$ positive definite, and functions $R_n$ such that
    \begin{equation}
        \ell_n(\theta)=\ell_n(\theta_n)+\frac{1}{2}(\theta-\theta_n)^\top H_n(\theta-\theta_n)+R_n(\theta-\theta_n),
    \end{equation}
    where for some $\varepsilon_0, c_0>0$, $|R_n(h)|\leq c_0\|h\|^3$, for $\|h\|<\varepsilon_0$.
\end{assumption}

\begin{assumption}[Minimizer identifiability]\label{ass:ident}
    For every $\varepsilon>0$,
    \begin{equation}
        \underset{n\rightarrow \infty}{\lim\inf}\underset{\theta\notin B_\varepsilon(\theta_n)}{\inf}[\Delta_n(\theta)]>0,
    \end{equation}
    where $\Delta_n(\theta):=\ell_n(\theta)-\ell_n(\theta_n).$
\end{assumption}

\begin{assumption}[Prior regularity]\label{ass:prior}
    The prior density $\pi$ is continuous at $\theta_0$ and $\pi(\theta_0)>0$.
\end{assumption}

Note that these assumptions do not involve the $\alpha$-divergence of the GVI problem at all, hence our results will apply to losses and priors such as those discussed in Section 6 of \cite{miller2021asymptotic}. By Lemma \ref{lem:local}, they also imply that, for all sufficiently large $n$, $\theta_n$ is the unique global minimizer of $\ell_n$ and $\Delta_n\geq0$.

\subsection{The $\alpha\in(0,1)$ case}\label{subsection:<1}

We first describe the structure of the minimizer of (\ref{eq:GVI_problem}) for the case where $\alpha\in(0,1)$. For the rest of this section we abbreviate $\kappa:=(1-\alpha)^{-1}$.

\begin{proposition}[Explicit solution for $\alpha\in (0,1)$]\label{prop:exist<1}
Let $\alpha\in(0,1)$ and suppose Assumptions \ref{ass:quadratic}, \ref{ass:ident} and \ref{ass:prior} hold. Assume furthermore that $2\kappa\geq d$.
In that case for all sufficiently large $n$ there exists a unique $\delta_n>0$ such that
\begin{equation}\label{eq:normalizer<1}
\int_{\mathbb{R}^d}\left[\delta_n+\frac{(1-\alpha)n}{\eta}\Delta_n(\theta)\right]^{-\kappa}d\Pi(\theta)=1.
\end{equation}

Furthermore, among $Q\ll\Pi$, the GVI problem in (\ref{eq:GVI_problem}) admits the unique solution
\begin{equation}\label{eq:solution<1}
\frac{dQ_n^\star}{d\Pi}(\theta)=\left[\delta_n+\frac{(1-\alpha)n}{\eta}\Delta_n(\theta)\right]^{-\kappa}.
\end{equation}

\end{proposition}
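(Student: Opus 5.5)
The plan has two parts: (i) existence and uniqueness of $\delta_n$ through monotonicity and continuity of the normalizing function, and (ii) optimality and uniqueness of $Q_n^\star$ through convex duality / first-order conditions for the strictly convex integrand $f_\alpha$.

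\textbf{Step 1: the normalizing equation.} Define
\[
F_n(\delta):=\int_{\mathbb{R}^d}\left[\delta+\tfrac{(1-\alpha)n}{\eta}\Delta_n(\theta)\right]^{-\kappa}d\Pi(\theta),\qquad \delta>0.
\]
By Lemma \ref{lem:local}, for $n$ large $\Delta_n\ge0$, so the integrand is at most $\delta^{-\kappa}$ and $F_n(\delta)\le\delta^{-\kappa}<\infty$.

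Monotone convergence makes $F_n$ continuous on $(0,\infty)$. It is strictly decreasing, because the integrand is strictly decreasing in $\delta$ and $\Pi$ has positive mass. Dominated convergence gives $F_n(\delta)\to0$ as $\delta\to\infty$.

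For the behaviour as $\delta\downarrow0$ I use Assumption \ref{ass:quadratic}. Near $\theta_n$ we have $\Delta_n(\theta)\le C\|\theta-\theta_n\|^2$. By Assumption \ref{ass:prior}, $\pi\ge\pi(\theta_0)/2$ on a small ball around $\theta_n$ for $n$ large. Hence
\[
F_n(\delta)\ \ge\ c\int_{\|h\|<r}\bigl(\delta+C'n\|h\|^2\bigr)^{-\kappa}dh .
\]
As $\delta\downarrow0$ this tends, by monotone convergence, to $c\int_{\|h\|<r}(C'n\|h\|^2)^{-\kappa}dh$. In polar coordinates that integral is $\int_0^r\rho^{d-1-2\kappa}d\rho$, which diverges exactly when $2\kappa\ge d$. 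This is where the hypothesis enters. So $F_n(0^+)=\infty$, and the intermediate value theorem gives a unique $\delta_n$ with $F_n(\delta_n)=1$.

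\textbf{Step 2: optimality.} Write $Q=g\,\Pi$ with $g\ge0$ and $\int g\,d\Pi=1$. The objective equals, up to the constant $\ell_n(\theta_n)$,
\[
J(g)=\int\Bigl[\Delta_n g+\tfrac{\eta}{n}f_\alpha(g)\Bigr]d\Pi .
\]
The integrand is strictly convex in $g$, since $f_\alpha''(t)=t^{\alpha-2}>0$. Introduce a multiplier $\lambda$ and minimize $\Delta_n t+\frac{\eta}{n}f_\alpha(t)-\lambda t$ pointwise over $t>0$. The stationarity condition reads
\[
\Delta_n+\frac{\eta}{n}\cdot\frac{t^{\alpha-1}-1}{\alpha-1}=\lambda .
\]
Solving gives $t^{\alpha-1}=1+\frac{(1-\alpha)n}{\eta}(\Delta_n-\lambda)$, that is,
\[
t=\Bigl[\delta+\tfrac{(1-\alpha)n}{\eta}\Delta_n\Bigr]^{-\kappa},\qquad \delta:=1-\tfrac{(1-\alpha)n}{\eta}\lambda .
\]
Choosing $\lambda$ so that $\delta=\delta_n$ yields the candidate $g^\star$, which is a probability density by Step 1.

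For a rigorous proof I will avoid Lagrangian theory and argue directly. Let $\phi(\theta,t)=\Delta_n t+\frac{\eta}{n}f_\alpha(t)$. Convexity gives, for any admissible $g$,
\[
\phi(\theta,g)\ \ge\ \phi(\theta,g^\star)+\partial_t\phi(\theta,g^\star)\,(g-g^\star),
\]
and $\partial_t\phi(\theta,g^\star)=\lambda$ is constant in $\theta$. Integrating, and using $\int(g-g^\star)\,d\Pi=0$, gives $J(g)\ge J(g^\star)$. Strict convexity upgrades this to equality only if $g=g^\star$ $\Pi$-a.e., which is the uniqueness claim.

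\textbf{Main obstacle.} The delicate step is the integrability bookkeeping. I need $J(g^\star)<\infty$, and I need the integration of the convexity inequality to be legitimate when $\int\Delta_n g\,d\Pi$ may be infinite; in that case $J(g)=\infty$ and the inequality is trivial.

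For $g^\star$ itself I will use that $D_\alpha\le1/(\alpha(1-\alpha))$ on $\{Q\ll\Pi\}$, so only $\int\Delta_n g^\star\,d\Pi$ needs checking. From the stationarity relation,
\[
\Delta_n g^\star\ \lesssim\ \tfrac{\eta}{n}\,(g^\star)^{\alpha}+|\lambda|\,g^\star .
\]
Both terms on the right are $\Pi$-integrable: the second because $\int g^\star d\Pi=1$, the first by Jensen since $\alpha<1$.

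The other point requiring care is Lemma \ref{lem:local}'s guarantee that $\Delta_n\ge0$ globally for large $n$. This is needed for the integrand in Step 1 to be well defined for every $\delta>0$, and it is why the statement holds only for sufficiently large $n$.
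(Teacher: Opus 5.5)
Your proposal is correct and follows essentially the same route as the paper's proof. For the normalizer you use strict monotonicity and continuity of $F_n$, with $F_n(\delta)\to\infty$ as $\delta\downarrow0$ coming from the divergence of $\int_{\|h\|\le\varepsilon_1}\|h\|^{-2\kappa}\,dh$ when $2\kappa\ge d$. For optimality and uniqueness you use the supporting-line inequality for the strictly convex $f_\alpha$ at the candidate density, with a constant multiplier ($\partial_t\phi(\theta,g^\star)=\lambda$ in your normalization). Your explicit check that the candidate has finite objective, via the stationarity identity and Jensen's inequality, fills in a point the paper leaves implicit for $\alpha\in(0,1)$. One can also see it directly: $\Delta_n g^\star\le \frac{\eta}{(1-\alpha)n}\delta_n^{1-\kappa}$ is bounded because $\kappa>1$.
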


This explicit form allows us to study its asymptotic behaviour. In particular, we now prove a version of Theorem 4 in \cite{miller2021asymptotic}: a Bernstein--von Mises theorem for $\alpha$-divergence GVI.

\begin{theorem}\label{thm:main_a_leq}
    Suppose Assumptions \ref{ass:quadratic}, \ref{ass:ident} and \ref{ass:prior} hold and $2\kappa>d$. Consider the rescaling factor $s^2_n:=\frac{\eta\delta_n}{n}$. Then the following hold:
    \begin{enumerate}
        \item[(1)] $s_n^{2\kappa-d}(n/\eta)^\kappa\rightarrow C_\alpha:=\pi(\theta_0)\int_{\mathbb{R}^d}\left(1+\frac{1-\alpha}{2}x^\top H_0x\right)^{-\kappa}dx\in(0,\infty)$. In particular, $s_n\asymp n^{-\kappa/(2\kappa-d)}$.
        \item[(2)] If $q_n$ denotes the density of $\frac{\theta-\theta_n}{s_n}$, with $\theta\sim Q^\star_n$, then $q_n\rightarrow q$ in $L^1(\mathbb{R}^d)$, where
        \begin{equation}\label{eq:student-limit}
            q(x)\propto \left(1+\frac{(1-\alpha)}{2}x^\top H_0 x\right)^{-\kappa}.
        \end{equation}
    \end{enumerate}
\end{theorem}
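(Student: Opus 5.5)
We start from the explicit form in Proposition~\ref{prop:exist<1} and change variables $\theta=\theta_n+s_nx$. Dividing the bracket by $\delta_n$ and using $s_n^2=\eta\delta_n/n$, the density of $x$ is
\begin{equation*}
q_n(x)=s_n^d\,\delta_n^{-\kappa}\,\pi(\theta_n+s_nx)\,g_n(x)^{-\kappa},
\qquad
g_n(x):=1+\frac{(1-\alpha)}{s_n^2}\,\Delta_n(\theta_n+s_nx).
\end{equation*}
The normalization \eqref{eq:normalizer<1} then reads $s_n^d\delta_n^{-\kappa}I_n=1$ with $I_n:=\int\pi(\theta_n+s_nx)g_n(x)^{-\kappa}dx$. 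Since $\delta_n^{-\kappa}=s_n^{-2\kappa}(n/\eta)^{-\kappa}$, this is exactly $s_n^{2\kappa-d}(n/\eta)^\kappa=I_n$. Part (1) is therefore equivalent to $I_n\to C_\alpha$, and then part (2) follows from $q_n=\pi(\theta_n+s_n\cdot)g_n^{-\kappa}/I_n$ by Scheffé's lemma once we have pointwise convergence. The whole proof thus reduces to showing $s_n\to0$ and $I_n\to C_\alpha$ by dominated convergence.

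\textbf{Step 1: $s_n\to0$.} We argue by contradiction. Suppose $\delta_n/n\not\to0$ along a subsequence. Since $\Delta_n\ge0$, the normalization gives $1\le\delta_n^{-\kappa}$, so $\delta_n\le1$, which contradicts $\delta_n/n$ bounded below. Hence $\delta_n\le 1$ and $s_n^2\le\eta/n\to0$. (We should also check that $\delta_n$ does not need to be bounded below, since only $s_n\to 0$ is used here.)

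\textbf{Step 2: pointwise limit.} For fixed $x$ and large $n$, Assumption~\ref{ass:quadratic} gives $\Delta_n(\theta_n+s_nx)/s_n^2=\tfrac12x^\top H_nx+O(s_n\|x\|^3)\to\tfrac12x^\top H_0x$. Together with continuity of $\pi$ at $\theta_0$ and $\theta_n\to\theta_0$, this yields pointwise convergence of the integrand of $I_n$ to $\pi(\theta_0)(1+\tfrac{1-\alpha}{2}x^\top H_0x)^{-\kappa}$. This limit is integrable precisely because $2\kappa>d$.

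\textbf{Step 3: domination (the main obstacle).} We split $\mathbb{R}^d$ into three regions.

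On the local region $\|s_nx\|<\varepsilon$, with $\varepsilon$ small, we use $\lambda_{\min}(H_n)\ge\lambda>0$ and $|R_n(h)|\le c_0\|h\|^3\le c_0\varepsilon\|h\|^2$. These give $\Delta_n\ge c\|s_nx\|^2$, so $g_n(x)\ge1+c'\|x\|^2$. Since $\pi$ is bounded near $\theta_0$, the integrand there is dominated by $C(1+c'\|x\|^2)^{-\kappa}$, which is integrable since $2\kappa>d$.

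On the far region we return to $\theta$-coordinates. Its contribution to $I_n$ is
\begin{equation*}
s_n^{-d}\int_{\|\theta-\theta_n\|\ge\varepsilon}\big(1+(1-\alpha)\Delta_n(\theta)/s_n^2\big)^{-\kappa}\,d\Pi(\theta).
\end{equation*}
Assumption~\ref{ass:ident} gives $\Delta_n\ge c_\varepsilon>0$ there, so the contribution is at most $s_n^{-d}(c s_n^{-2})^{-\kappa}=O(s_n^{2\kappa-d})\to0$, using only that $\Pi$ is a probability measure. This is where $2\kappa>d$ is used again.

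Combining the three regions, dominated convergence on the local region gives $I_n\to C_\alpha$, which proves (1). Then $q_n\to q$ pointwise with $\int q_n=1=\int q$, so Scheffé's lemma gives $L^1$ convergence, which proves (2). The rate $s_n\asymp n^{-\kappa/(2\kappa-d)}$ follows directly from (1).
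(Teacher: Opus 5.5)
Your proposal is correct and follows essentially the same route as the paper's proof: the same change of variables $\theta=\theta_n+s_nx$, the identity $I_n=s_n^{2\kappa-d}(n/\eta)^\kappa$, pointwise convergence, domination by $M(1+c'\|x\|^2)^{-\kappa}$ on the local region, the $O(s_n^{2\kappa-d})$ bound on the far region, and Scheff\'e's lemma (and only two regions are actually needed, not three). The one difference is Step 1. You use the direct bound $\delta_n\le 1$, which follows from $\Delta_n\ge 0$ and gives $s_n^2\le\eta/n$; this replaces the paper's contradiction argument showing $\delta_n\to 0$, and it suffices because only $s_n\to 0$ is used afterwards.
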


Observe that the limiting distribution in Theorem \ref{thm:main_a_leq}(2) is the multivariate Student-$t$ distribution  $t_{2\kappa-d}\left(0,\frac{2\kappa}{2\kappa-d}H_0^{-1}\right)$, where the second argument denotes the scale matrix, which departs from the classical Bernstein--von Mises behaviour of Bayesian or Gibbs posteriors, where the local shape is Normal.
Our result highlights an important property of $\alpha$-divergence generalized posteriors: unlike standard Bayes or Gibbs posteriors, they show non-Gaussian asymptotics, and the rescaling needed to observe this qualitative behaviour also differs from the standard $n^{-1/2}$. This has important consequences when using these distributions for uncertainty quantification.

Furthermore, part (1) of Theorem \ref{thm:main_a_leq} gives
\[
    s_n\sim C_\alpha^{1/(2\kappa-d)}\left(\frac{\eta}{n}\right)^{\kappa/(2\kappa-d)}.
\]
Hence although $s_n$ is data-dependent, $s_n n^{\kappa/(2\kappa-d)}$ converges to a deterministic constant. Moreover, in contrast with the KL case, where the rescaling $\sqrt{\eta/n}$ does not involve the prior, the asymptotic width of the generalized posterior depends on the prior density at $\theta_0$: a larger $\pi(\theta_0)$ yields a wider posterior. 

Under the standard regularity conditions that guarantee the asymptotic normality of $M$-estimators, $\sqrt{n}(\theta_n-\theta_0)\Rightarrow \mathcal{N}(0,\Sigma) $ \cite{van2000asymptotic}, we observe a scale mismatch: $\theta_n$ fluctuates around $\theta_0$ on the scale $n^{-1/2}$, while Theorem \ref{thm:main_a_leq} asserts that $Q_n^\star$ concentrates around $\theta_n$ at scale $s_n\asymp n^{-\kappa/(2\kappa-d)}$, which is strictly faster than the square root rate when $\alpha\in(0,1)$ and $2\kappa>d.$ This in turn implies that every fixed-level credible region of the form $\theta_n+s_nA$, with $A$ bounded, is eventually too narrow to cover $\theta_0$, making the generalized posterior asymptotically miscalibrated. We formalize the scale mismatch in the following Corollary.

\begin{corollary}[Frequentist coverage]\label{cor:alphaleq1}
Assume the conditions of Theorem~\ref{thm:main_a_leq}, and suppose moreover that
\begin{equation}
    \sqrt{n}(\theta_n-\theta_0)\Rightarrow Z,
\end{equation}
where $\mathbb{P}(Z=0)=0$. Fix the credibility level $1-\beta\in(0,1)$ and let $A_n\subseteq\mathbb{R}^d$ be (possibly data-dependent) measurable sets such that $\int_{A_n} q(x)dx\to1-\beta$ in probability and, for some $M<\infty$, $\mathbb{P}(A_n\subseteq B_M(0))\to1$. Then for $C_n:=\theta_n+s_n A_n$, we have 
\begin{equation}
    Q^\star_n(C_n)\longrightarrow 1-\beta\ \text{in probability},\quad\text{while}\quad\mathbb{P}(\theta_0\in C_n)\longrightarrow 0.
\end{equation}
In particular, this holds for $C_n=\theta_n+s_nA$ with a fixed bounded measurable $A$ such that $\int_A q(x)dx=1-\beta$.

\end{corollary}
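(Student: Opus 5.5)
The argument has two independent halves: credibility follows from the $L^1$ convergence in Theorem~\ref{thm:main_a_leq}(2), and the coverage statement comes from the scale mismatch between $s_n$ and $n^{-1/2}$.

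\textbf{Credibility.} By definition of $q_n$, we have $Q_n^\star(C_n)=\int_{A_n}q_n(x)\,dx$. For any measurable $A$,
\[
\Bigl|\int_{A}q_n-\int_{A}q\Bigr|\le\|q_n-q\|_{L^1}.
\]
This bound is uniform in $A$, so it also holds for data-dependent $A_n$. By Theorem~\ref{thm:main_a_leq}(2), $\|q_n-q\|_{L^1}\to0$ almost surely (on the common event where the assumptions hold), hence in probability. Together with $\int_{A_n}q\to1-\beta$ in probability, this gives $Q_n^\star(C_n)\to1-\beta$ in probability.

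\textbf{Coverage.} Observe that
\[
\theta_0\in C_n \iff (\theta_0-\theta_n)/s_n\in A_n.
\]
On the event $\{A_n\subseteq B_M(0)\}$, whose probability tends to one, this forces $\|\theta_n-\theta_0\|\le Ms_n$, that is,
\[
\sqrt n\,\|\theta_n-\theta_0\|\le M\sqrt n\, s_n=:\varepsilon_n .
\]
Theorem~\ref{thm:main_a_leq}(1) gives $s_n\sim C_\alpha^{1/(2\kappa-d)}(\eta/n)^{\kappa/(2\kappa-d)}$. Since $\alpha\in(0,1)$ means $\kappa>1$, we get $\kappa/(2\kappa-d)>1/2$: this is equivalent to $2\kappa>2\kappa-d$ when $d\ge1$, and $2\kappa-d>0$ by hypothesis. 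Therefore $\sqrt n\,s_n\to0$ almost surely, so $\varepsilon_n\to0$ almost surely.

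It then suffices to show
\[
\mathbb P\bigl(\sqrt n\|\theta_n-\theta_0\|\le\varepsilon_n\bigr)\to0 .
\]
Fix $\varepsilon>0$. We have
\[
\mathbb P\bigl(\sqrt n\|\theta_n-\theta_0\|\le\varepsilon_n\bigr)\le \mathbb P(\varepsilon_n>\varepsilon)+\mathbb P\bigl(\sqrt n\|\theta_n-\theta_0\|\le\varepsilon\bigr).
\]
The first term tends to zero. For the second, apply the portmanteau theorem to the closed set $\bar B_\varepsilon(0)$: its $\limsup$ is at most $\mathbb P(\|Z\|\le\varepsilon)$. Letting $\varepsilon\downarrow0$, continuity from above gives $\mathbb P(\|Z\|\le\varepsilon)\to\mathbb P(Z=0)=0$. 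Combining,
\[
\mathbb P(\theta_0\in C_n)\le\mathbb P(A_n\not\subseteq B_M(0))+o(1)\to0 .
\]

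\textbf{Fixed regions.} For a fixed bounded measurable $A$ with $\int_A q=1-\beta$, both hypotheses on $A_n$ hold trivially, so the special case follows.

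The only delicate point is that $s_n$ is random; it is handled by the almost sure limit in Theorem~\ref{thm:main_a_leq}(1), which makes $\varepsilon_n\to0$ in probability and keeps the argument within a routine Slutsky/portmanteau step.
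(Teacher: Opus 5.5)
Your proposal is correct and follows the paper's proof essentially step for step: the uniform bound $\bigl|\int_{A_n}q_n-\int_{A_n}q\bigr|\leq\|q_n-q\|_{L^1}$ gives credibility, and coverage comes from the inclusion $\{\theta_0\in C_n\}\cap\{A_n\subseteq B_M(0)\}\subseteq\{\sqrt{n}\|\theta_n-\theta_0\|\leq M\sqrt{n}\,s_n\}$, the same three-term split at a fixed $\varepsilon$, portmanteau on the closed ball, and $\varepsilon\downarrow0$ using $\mathbb{P}(Z=0)=0$. One small remark: your appeal to $\kappa>1$ is unnecessary, since $\kappa/(2\kappa-d)>1/2$ follows from $d\geq1$ and $2\kappa>d$ alone.
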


\subsection{The $\alpha>1$ case}\label{subsec:alph>1}

We now turn to the case where $\alpha>1$, which exhibits qualitatively different behaviour. 

\begin{proposition}[Explicit solution for $\alpha>1$]\label{prop:exist>1}
    Let $\alpha>1$, put $p=(\alpha-1)^{-1}$, and suppose
    Assumptions \ref{ass:quadratic}, \ref{ass:ident} and
    \ref{ass:prior} hold.
    In that case, for all sufficiently large $n$, there
    exists a unique $\delta_n>0$ such that
    \begin{equation}
        \int_{\mathbb{R}^d}
        \left[
            \delta_n-\frac{(\alpha-1)n}{\eta}
            \Delta_n(\theta)
        \right]^p_+d\Pi(\theta)=1.
    \end{equation}

    Furthermore, the GVI problem in
    (\ref{eq:GVI_problem}) admits the unique solution
    \begin{equation}
        \frac{dQ_n^\star}{d\Pi}(\theta)
        =
        \left[
            \delta_n-\frac{(\alpha-1)n}{\eta}
            \Delta_n(\theta)
        \right]^p_+,
    \end{equation}
    where $[u]_+=\max\{u,0\}$.
\end{proposition}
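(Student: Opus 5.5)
We split the argument into two parts: existence and uniqueness of the normalizer $\delta_n$, and then identification of the GVI minimizer through a pointwise convex-duality argument. Throughout, $n$ is taken large enough that Lemma \ref{lem:local} applies, so $\theta_n$ is the unique global minimizer of $\ell_n$ and $\Delta_n\ge 0$.

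\textbf{Step 1: the normalizer.} Define
\[
F_n(\delta)=\int\left[\delta-\tfrac{(\alpha-1)n}{\eta}\Delta_n(\theta)\right]_+^p d\Pi(\theta), \qquad \delta\ge 0.
\]
Since $\Delta_n\ge0$, the integrand is bounded by $\delta^p$, so $F_n(\delta)\le\delta^p<\infty$ and no integrability issue arises, unlike the case $\alpha<1$. Dominated convergence shows that $F_n$ is continuous, and $F_n(0)=0$. Moreover $F_n$ is strictly increasing on $\{F_n>0\}$. For large $\delta$, the integrand tends to infinity pointwise, so by monotone convergence $F_n(\delta)\to\infty$. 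To get uniqueness of the root we need strict increase wherever $F_n>0$. This holds because $\Pi(\{\Delta_n<c\})>0$ for every $c>0$: by Assumption \ref{ass:quadratic}, $\{\Delta_n<c\}$ contains a ball around $\theta_n$, and $\pi>0$ near $\theta_0$ by Assumption \ref{ass:prior} while $\theta_n\to\theta_0$. Hence there is a unique $\delta_n>0$ with $F_n(\delta_n)=1$.

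\textbf{Step 2: optimality via pointwise Fenchel duality.} Write $g=dQ/d\Pi$. The objective equals
\[
\int\left[\ell_n g+\tfrac{\eta}{n}f_\alpha(g)\right]d\Pi ,
\]
and since $\int g\,d\Pi=1$ we may replace $\ell_n$ by $\Delta_n$ and add the term $-\lambda(g-1)$ for any constant $\lambda$. Set $\phi(t)=\tfrac{\eta}{n}f_\alpha(t)+(\Delta_n-\lambda)t$, a strictly convex function on $[0,\infty)$. We have
\[
f_\alpha'(t)=\frac{t^{\alpha-1}-1}{\alpha-1},
\]
so the first-order condition gives
\[
t^{\alpha-1}=1+(\alpha-1)\tfrac{n}{\eta}(\lambda-\Delta_n)
\]
when the right-hand side is positive. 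Otherwise the constrained minimizer over $t\ge0$ is $t=0$, because $\phi'(0^+)\ge0$ in that case (for $\alpha>1$, $f_\alpha'(0)=-1/(\alpha-1)$ is finite). Choosing $\lambda$ so that $1+(\alpha-1)\tfrac{n}{\eta}\lambda=\delta_n$ recovers $g^\star=[\delta_n-\tfrac{(\alpha-1)n}{\eta}\Delta_n]_+^p$. For any admissible $g$, pointwise minimality gives $\phi(g)\ge\phi(g^\star)$, with equality only where $g=g^\star$ by strict convexity. Integrating against $\Pi$ and using $\int g\,d\Pi=\int g^\star d\Pi=1$ shows that $g^\star$ is the unique minimizer $\Pi$-a.e.

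\textbf{Main obstacle.} The delicate point is justifying the integration in Step 2 when $\int\ell_n\,dQ$ or $D_\alpha(Q\|\Pi)$ may be infinite. Since $\Delta_n\ge0$, the objective is well defined in $[\ell_n(\theta_n),\infty]$. We would restrict attention to $Q$ with finite objective, for which $\int\Delta_n g\,d\Pi<\infty$ and $f_\alpha(g)$ is integrable because $f_\alpha\ge0$. We would then check that $g^\star$ itself has finite objective. This holds because $g^\star\le\delta_n^p$ and $g^\star$ is supported on $\{\Delta_n\le\eta\delta_n/((\alpha-1)n)\}$, a set on which $\Delta_n$ is bounded. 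Finally, the $\lambda$-terms cancel exactly because both $g$ and $g^\star$ integrate to $1$.
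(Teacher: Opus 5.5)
Your proposal is correct and follows essentially the same route as the paper. You obtain $\delta_n$ from continuity, strict monotonicity (via positive prior mass of a small ball around $\theta_n$) and the limits of $F_n$ at $0$ and $\infty$. You then verify optimality with the KKT multiplier $\lambda=\eta(\delta_n-1)/((\alpha-1)n)$, including the finiteness check for $g^\star$. The only cosmetic difference is that you phrase optimality as pointwise minimization of the strictly convex integrand $\phi_\theta$ over $[0,\infty)$, whereas the paper integrates the equivalent gradient inequality $\mathcal J_n(r)-\mathcal J_n(r_n)\geq\int[n\Delta_n+\eta f_\alpha'(r_n)](r-r_n)\,d\Pi$ together with the sign condition on $\{r_n=0\}$.
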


Note that in this case there is no dimensional restriction to the existence of such a solution, and the restriction $Q\ll\Pi$ is implied by the standard definition of $D_\alpha$. We now provide the analogue to Theorem \ref{thm:main_a_leq} for $\alpha>1$.

\begin{theorem}
\label{thm:bvm-alpha-gt1}
Suppose Assumptions \ref{ass:quadratic}, \ref{ass:ident} and \ref{ass:prior} hold, let $p=(\alpha-1)^{-1}$ and define $s_n^2:=\frac{\eta\delta_n}{n}$.
Then the following hold:
\begin{enumerate}
    \item[(1)] $s_n^{2p+d}(n/\eta)^p\rightarrow C_\alpha^{-1}$, where $C_\alpha:=\pi(\theta_0)\int_{\mathbb{R}^d}\left(1-\frac{1}{2p}x^\top H_0x\right)_+^p dx\in(0,\infty)$. In particular, $s_n\asymp n^{-p/(2p+d)}$.

    \item[(2)] If $q_n$ denotes the density of $\frac{\theta-\theta_n}{s_n},$ with $\theta\sim Q_n^\star$,
    then $q_n\longrightarrow q$ in $L^1(\mathbb R^d)$,
    where
    \begin{equation}
    \label{eq:pearson-limit}
        q(x)
        \propto
        \left(
            1-\frac{1}{2p}x^\top H_0x
        \right)_+^p.
    \end{equation}
\end{enumerate}
\end{theorem}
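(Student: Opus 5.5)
We will work directly from the explicit solution in Proposition \ref{prop:exist>1}. Substituting $\theta=\theta_n+s_nx$ and using $\delta_n=ns_n^2/\eta$, the density of $x=(\theta-\theta_n)/s_n$ becomes
\begin{equation*}
q_n(x)=s_n^d\,\pi(\theta_n+s_nx)\Big[\delta_n-\tfrac{(\alpha-1)n}{\eta}\Delta_n(\theta_n+s_nx)\Big]_+^p
=s_n^d\delta_n^p\,\pi(\theta_n+s_nx)\Big[1-\tfrac{1}{2p}\,g_n(x)\Big]_+^p,
\end{equation*}
where $g_n(x):=2\Delta_n(\theta_n+s_nx)/s_n^2$. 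Note that $s_n^d\delta_n^p=s_n^{2p+d}(n/\eta)^p$, so the normalization $\int q_n=1$ reads
\begin{equation*}
s_n^{2p+d}(n/\eta)^p\int\pi(\theta_n+s_nx)\big[1-\tfrac{1}{2p}g_n(x)\big]_+^p\,dx=1 .
\end{equation*}
Part (1) then follows once we show that the integral converges to $C_\alpha$, and part (2) follows from the convergence of the integrand. The plan has three steps.

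\textbf{Step 1: $\delta_n$ stays bounded, so $s_n\to0$ and the support localizes.} The support of $Q_n^\star$ is $\{\Delta_n\le \eta\delta_n/((\alpha-1)n)\}=\{\Delta_n\le p s_n^2\}$. If $s_n$ did not tend to $0$ along a subsequence, then the normalization would fail.

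To see this, first suppose $\delta_n\to\infty$. Restricting the integral to a small ball $B_\varepsilon(\theta_n)$, Assumption \ref{ass:quadratic} gives $\Delta_n\le C\varepsilon^2$ there, and Assumption \ref{ass:prior} gives $\Pi(B_\varepsilon(\theta_n))\ge c\varepsilon^d$. Hence the integral in Proposition \ref{prop:exist>1} is at least $c\varepsilon^d(\delta_n-Cn\varepsilon^2)^p$. Choosing $\varepsilon$ of order $\sqrt{\delta_n/n}$ makes this lower bound of order $(\delta_n/n)^{d/2}\delta_n^p$. Since that bound must stay $\le 1$, we get $\delta_n^{p+d/2}\lesssim n^{d/2}$, and therefore $s_n^2=\eta\delta_n/n\lesssim n^{-2p/(2p+d)}\to0$.

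Next, given $s_n\to0$, Assumption \ref{ass:ident} implies that for any fixed $\varepsilon$ we eventually have $\inf_{\theta\notin B_\varepsilon(\theta_n)}\Delta_n>ps_n^2$. So the support lies inside $B_\varepsilon(\theta_n)$ for all large $n$.

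Within that ball, Assumption \ref{ass:quadratic} gives $\Delta_n(\theta)\ge \tfrac{\lambda}{4}\|\theta-\theta_n\|^2$ once $\varepsilon$ is small, with $\lambda$ the smallest eigenvalue of $H_0$. Consequently $x$ ranges over a fixed ball $B_M(0)$, namely $\|x\|^2\le 4p/\lambda$.

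\textbf{Step 2: pointwise and dominated convergence.} For fixed $x$ we have
\begin{equation*}
g_n(x)=x^\top H_nx+2R_n(s_nx)/s_n^2,
\end{equation*}
and the remainder satisfies $|2R_n(s_nx)/s_n^2|\le 2c_0s_n\|x\|^3\to0$. Hence $g_n(x)\to x^\top H_0x$. The map $u\mapsto[1-u/(2p)]_+^p$ is continuous, and $\pi(\theta_n+s_nx)\to\pi(\theta_0)$ by Assumption \ref{ass:prior}. Therefore the integrand converges pointwise to $\pi(\theta_0)(1-\tfrac{1}{2p}x^\top H_0x)_+^p$.

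For domination, the integrand vanishes outside $B_M(0)$ and is bounded by $\sup_{B_\varepsilon(\theta_0)}\pi<\infty$ on it. Dominated convergence then gives convergence of the integral to $C_\alpha$. Finiteness and positivity of $C_\alpha$ are clear, since the integrand is bounded with compact support and positive near $0$. This proves part (1), and the rate $s_n\asymp n^{-p/(2p+d)}$ follows immediately.

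\textbf{Step 3: $L^1$ convergence.} The prefactor $s_n^{2p+d}(n/\eta)^p$ tends to $C_\alpha^{-1}$ by part (1), so $q_n\to q$ pointwise with $q=C_\alpha^{-1}\pi(\theta_0)(1-\tfrac{1}{2p}x^\top H_0x)_+^p$. Both $q_n$ and $q$ are probability densities, so Scheffé's lemma gives $q_n\to q$ in $L^1$. Alternatively, the same dominated-convergence bound on $B_M(0)$ gives the result directly.

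The main obstacle is Step 1: establishing a priori that $s_n\to0$, and that the support shrinks uniformly into the quadratic neighbourhood. This is needed before the Taylor expansion can be used, and it is also what makes $M$ uniform in $n$. The lower bound via a small ball around $\theta_n$, combined with Assumption \ref{ass:ident}, should handle it. Some care is needed because $\theta_n$ moves, but $\theta_n\to\theta_0$ together with the continuity of $\pi$ keeps the local constants uniform.
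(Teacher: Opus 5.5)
Your proposal is correct and follows essentially the same route as the paper: the explicit density after the change of variables, $s_n\to0$ from a lower bound on the normalization over a small ball around $\theta_n$, vanishing of the kernel outside $B_{\varepsilon_1}(\theta_n)$ via Assumption~\ref{ass:ident}, dominated convergence on a fixed compact set, and Scheff\'e's lemma. The one slip is in Step 1, where choosing $\varepsilon\asymp\sqrt{\delta_n/n}$ presupposes $\sqrt{\delta_n/n}\le\varepsilon_1$, which is what you are proving; fix this by first applying your inequality with a fixed $\varepsilon\le\varepsilon_1$ to get $\limsup_n\delta_n/n\lesssim\varepsilon^2$, hence $\delta_n/n\to0$ (this is the paper's argument), and note that, contrary to your Step 1 heading, $\delta_n$ is not bounded but grows like $n^{d/(2p+d)}$.
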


Observe that the limiting distribution in Theorem
\ref{thm:bvm-alpha-gt1}(2) is a multivariate Pearson
type-II distribution supported on the ellipsoid
$\{x\in\mathbb{R}^d:x^\top H_0x\leq2p\}$.
Its mean is zero and its covariance matrix is
$\frac{2p}{2p+d+2}H_0^{-1}$.
As before, part (1) gives $s_n\sim C_\alpha^{-1/(2p+d)}(\eta/n)^{p/(2p+d)}$; here a larger $\pi(\theta_0)$ yields a narrower posterior, the opposite of the $\alpha\in(0,1)$ case.
Theorem \ref{thm:bvm-alpha-gt1} exhibits the opposite scale
mismatch to the case $\alpha\in(0,1)$. Again, under standard regularity conditions for $M$-estimators,
\begin{equation}
    \sqrt{n}(\theta_n-\theta_0)
    \Rightarrow \mathcal{N}(0,\Sigma).
\end{equation}
Thus, the sampling fluctuations of $\theta_n$ occur at
scale $n^{-1/2}$, whereas Theorem \ref{thm:bvm-alpha-gt1}
shows that $Q_n^\star$ concentrates around $\theta_n$
at scale
\begin{equation}
    s_n\asymp n^{-p/(2p+d)},
    \qquad n^{-1/2}=o(s_n).
\end{equation}
The posterior therefore concentrates around its empirical centre more slowly than that centre concentrates around $\theta_0$. Consequently, fixed-level credible balls centred at $\theta_n$ have asymptotic frequentist coverage one. The following corollary formalizes this scale mismatch for regions containing a neighbourhood of the origin in the rescaled coordinates.

\begin{corollary}[Frequentist coverage]
\label{cor:coverage-alpha-gt1}
Assume the conditions of Theorem~\ref{thm:bvm-alpha-gt1},
and suppose moreover that
\begin{equation}
    \sqrt n(\theta_n-\theta_0)\Rightarrow Z,
\end{equation}
where $Z$ is an $\mathbb{R}^d$-valued random vector.
Fix the credibility level $1-\beta\in(0,1)$ and let $A_n\subseteq\mathbb R^d$ be (possibly data-dependent) measurable sets such that $\int_{A_n} q(x)\,dx\to1-\beta$ in probability and, for some $r>0$, $\mathbb P(\overline B_r(0)\subseteq A_n)\to1$.
Then for $C_n:=\theta_n+s_nA_n$, we have
\begin{equation}
    Q_n^\star(C_n)\longrightarrow1-\beta\ \text{in probability},\quad\text{while}\quad\mathbb P(\theta_0\in C_n)\longrightarrow1.
\end{equation}
In particular, this holds for $C_n=\theta_n+s_nA$ with a fixed measurable $A$ containing a neighbourhood of the origin and such that $\int_A q(x)\,dx=1-\beta$.
\end{corollary}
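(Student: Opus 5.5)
The proof has two parts: the posterior mass of $C_n$, which we get from the $L^1$ convergence in Theorem~\ref{thm:bvm-alpha-gt1}(2), and the coverage, which comes from the scale mismatch $n^{-1/2}=o(s_n)$.

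\textbf{Posterior mass.} By definition of $q_n$,
\[
Q_n^\star(C_n)=Q_n^\star\bigl(\theta_n+s_nA_n\bigr)=\int_{A_n}q_n(x)\,dx .
\]
Hence
\[
\Bigl|Q_n^\star(C_n)-\int_{A_n}q(x)\,dx\Bigr|\le\|q_n-q\|_{L^1}.
\]
The right-hand side tends to zero almost surely by Theorem~\ref{thm:bvm-alpha-gt1}(2), under the paper's convention that the assumptions hold on a common probability-one event. This bound is uniform over all measurable sets, so it applies to data-dependent $A_n$ without further work. Combined with $\int_{A_n}q\to1-\beta$ in probability, this gives $Q_n^\star(C_n)\to1-\beta$ in probability.

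\textbf{Coverage.} We have $\theta_0\in C_n$ if and only if $(\theta_0-\theta_n)/s_n\in A_n$. Write
\[
\frac{\theta_0-\theta_n}{s_n}=-\frac{\sqrt n(\theta_n-\theta_0)}{\sqrt n\,s_n}.
\]
The numerator converges in distribution to $Z$, so it is tight, i.e.\ $O_{\mathbb P}(1)$. By Theorem~\ref{thm:bvm-alpha-gt1}(1), $\sqrt n\,s_n\asymp n^{1/2-p/(2p+d)}=n^{d/(2(2p+d))}\to\infty$ almost surely. Here we use $s_n\sim C_\alpha^{-1/(2p+d)}(\eta/n)^{p/(2p+d)}$ with a deterministic positive constant, and $d\ge1$. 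By Slutsky's theorem, $(\theta_0-\theta_n)/s_n\to0$ in probability, so $\mathbb P\bigl(\|(\theta_0-\theta_n)/s_n\|\le r\bigr)\to1$. Therefore
\[
\mathbb P(\theta_0\in C_n)\ge\mathbb P\Bigl(\overline B_r(0)\subseteq A_n,\ \bigl\|(\theta_0-\theta_n)/s_n\bigr\|\le r\Bigr)\ge1-o(1)-o(1)\to1.
\]

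\textbf{Fixed $A$.} For a fixed measurable $A$ containing a neighbourhood of the origin, some closed ball $\overline B_r(0)$ lies in $A$, and $\int_A q=1-\beta$ holds exactly. So both hypotheses on $A_n$ hold trivially and the conclusion follows.

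\textbf{Main obstacle.} The only delicate point is measurability and joint handling of the random objects: $s_n$, $A_n$ and $q_n$ are all data-dependent. The plan is to work on the probability-one event where the conclusions of Theorem~\ref{thm:bvm-alpha-gt1} hold, so that almost-sure convergence implies convergence in probability. Unlike Corollary~\ref{cor:alphaleq1}, no non-degeneracy of $Z$ is needed, since only tightness of $\sqrt n(\theta_n-\theta_0)$ is used.
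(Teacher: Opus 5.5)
Your proof is correct and follows the paper's argument. The posterior-mass part is the same $L^1$ bound. For coverage, you combine tightness of $\sqrt n(\theta_n-\theta_0)$ with $\sqrt n\,s_n\to\infty$ on the event $\{\overline B_r(0)\subseteq A_n\}$, packaging via Slutsky what the paper does with an explicit split at level $M$ and the Portmanteau theorem applied to $\{\|z\|\geq M\}$.
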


The two regimes therefore depart from the classical Bernstein--von Mises picture in opposite directions. For $\alpha\in(0,1)$ the generalized posterior concentrates around $\theta_n$ faster than the sampling fluctuations of its centre and consequently undercovers, whereas for $\alpha>1$ it
concentrates more slowly and consequently overcovers. At the boundary
$\alpha=1$, both concentration exponents approach the usual
parametric exponent $1/2$.

\subsection{The phase transition at $\alpha=1$}
\label{subsec:KL}

It is well known that for $\alpha=1$, the GVI posterior is the Gibbs posterior
\begin{equation}
    \frac{dQ_n^\star}{d\Pi}(\theta)
    \propto
    \exp\left\{-\frac{n}{\eta}\ell_n(\theta)\right\},
\end{equation}
whose rescaled density under
$\sqrt{n/\eta}(\theta-\theta_n)$ converges to that of
$\mathcal{N}(0,H_0^{-1})$ under the same local quadratic,
identifiability and prior regularity assumptions; Theorem 4 of \cite{miller2021asymptotic}.
The common scaling $s_n^2=\eta\delta_n/n$ used in the
two non-KL regimes gives the following connection with
this classical BvM behaviour.

\begin{proposition}[Gaussian limit as $\alpha\rightarrow 1$]
\label{prop:alpha_to_one}
    Let $q_\alpha$ denote the limiting densities in
    Theorems \ref{thm:main_a_leq} and
    \ref{thm:bvm-alpha-gt1}, and let $\varphi_{H_0}$ be
    the density of $\mathcal{N}\left(0,H_0^{-1}\right)$.
    Then
    \begin{equation}
        \|q_\alpha-\varphi_{H_0}\|_{L^1(\mathbb{R}^d)}
        \underset{\alpha\rightarrow 1}{\longrightarrow}0.
    \end{equation}
    Moreover, the concentration exponents in both
    theorems can be written as
    \begin{equation}
        s_n\asymp n^{-b_\alpha},
        \qquad
        b_\alpha=\frac{1}{2+d(\alpha-1)},
    \end{equation}
    and $b_\alpha\rightarrow1/2$ as $\alpha\rightarrow1$.
\end{proposition}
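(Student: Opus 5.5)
The plan is to prove the $L^1$ convergence by Scheffé's lemma: pointwise convergence of the densities plus convergence of the normalizing constants. Write $u=x^\top H_0x\ge 0$ and work with the unnormalized kernels
\[
g_\alpha(x)=\Bigl(1+\tfrac{1-\alpha}{2}u\Bigr)^{-1/(1-\alpha)}\ (\alpha<1),\qquad g_\alpha(x)=\Bigl(1-\tfrac{\alpha-1}{2}u\Bigr)_+^{1/(\alpha-1)}\ (\alpha>1),
\]
and $g_1(x)=e^{-u/2}$. Both cases have the single form $g_\alpha(x)=\bigl[1-\tfrac{\alpha-1}{2}u\bigr]_+^{1/(\alpha-1)}$, the $q$-exponential of $-u/2$. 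In Theorem \ref{thm:main_a_leq} we have $\kappa=(1-\alpha)^{-1}$, and in Theorem \ref{thm:bvm-alpha-gt1} we have $1/(2p)=(\alpha-1)/2$ and $p=(\alpha-1)^{-1}$.

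\textbf{Pointwise convergence.} Take logarithms: $\log g_\alpha=\frac{1}{\alpha-1}\log\bigl(1-\tfrac{\alpha-1}{2}u\bigr)$. Since $\log(1+t)/t\to 1$ as $t\to 0$, this tends to $-u/2$ for each fixed $u$. For $\alpha>1$, a fixed $x$ lies inside the support ellipsoid $u<2/(\alpha-1)$ once $\alpha$ is close enough to $1$, because that ellipsoid grows to all of $\mathbb R^d$. So $g_\alpha\to g_1$ pointwise.

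\textbf{Domination and normalizing constants.} For $\alpha>1$, the inequality $\log(1-t)\le -t$ gives $g_\alpha\le e^{-u/2}$ directly. For $\alpha<1$, the map $\alpha\mapsto g_\alpha(x)$ is monotone, by the standard monotonicity of $q$-exponentials, so for $\alpha\in[\alpha_1,1)$ we get $g_\alpha\le g_{\alpha_1}$. Choose $\alpha_1<1$ with $2/(1-\alpha_1)>d$, so that $g_{\alpha_1}$ is integrable. This choice is possible because $\kappa\to\infty$ as $\alpha\to1$, so the constraint $2\kappa>d$ holds automatically near $1$.

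If the monotonicity is awkward to verify, use an explicit bound instead. The function $\kappa\log(1+v/\kappa)$ is increasing in $\kappa$. Hence for $\kappa\ge\kappa_1$ we have $(1+u/(2\kappa))^{-\kappa}\le(1+u/(2\kappa_1))^{-\kappa_1}$, which is integrable when $2\kappa_1>d$.

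Dominated convergence then gives $\int g_\alpha\to\int g_1=(2\pi)^{d/2}\det(H_0)^{-1/2}$. Therefore $q_\alpha=g_\alpha/\int g_\alpha\to\varphi_{H_0}$ pointwise, and Scheffé's lemma gives convergence in $L^1$. This domination step is the only real obstacle, and it is handled by the explicit bound above.

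\textbf{Exponents.} For $\alpha<1$, Theorem \ref{thm:main_a_leq}(1) gives the exponent $\kappa/(2\kappa-d)$. Multiplying numerator and denominator by $1-\alpha$ turns it into $1/(2-d(1-\alpha))=1/(2+d(\alpha-1))$. For $\alpha>1$, Theorem \ref{thm:bvm-alpha-gt1}(1) gives $p/(2p+d)$. Multiplying by $\alpha-1$ gives $1/(2+d(\alpha-1))$. Both equal $b_\alpha$, which is continuous in $\alpha$, so $b_\alpha\to 1/2$ as $\alpha\to1$.
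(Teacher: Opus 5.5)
Your proposal is correct and follows the paper's proof essentially step for step. Both arguments use pointwise convergence of the kernels to $e^{-x^\top H_0x/2}$, domination by $e^{-x^\top H_0x/2}$ for $\alpha>1$, domination by a fixed Student kernel with $2\kappa_0>d$ for $\alpha<1$, dominated convergence of the normalizing constants, Scheff\'e's lemma, and the algebraic rewriting of $\kappa/(2\kappa-d)$ and $p/(2p+d)$. One small addition on your side: checking that $\kappa\log(1+v/\kappa)$ is increasing in $\kappa$ justifies the $\alpha<1$ domination bound, which the paper asserts without proof.
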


It is remarkable that $\alpha=1$ acts as a phase transition between two qualitatively distinct asymptotic regimes: for $\alpha\in(0,1)$ the limiting distribution is a Student-$t$ and the posteriors overconcentrate around $\theta_n$, while for $\alpha>1$ the limiting distribution is a Pearson type-II and the posterior underconcentrates.
Both the limiting shape and the concentration exponent approach their classical Gaussian counterparts when $\alpha\rightarrow 1$. So do the prior-dependent prefactors $C_\alpha^{1/(2\kappa-d)}$ and $C_\alpha^{-1/(2p+d)}$ of Theorems \ref{thm:main_a_leq} and \ref{thm:bvm-alpha-gt1}, which tend to one because $C_\alpha$ converges to a positive constant (see (\ref{eq:kernel-integral})) while the exponents vanish. The KL case is thus the unique member of the $\alpha$-divergence family which exhibits classical BvM behaviour and whose width scale matches the scale of the fluctuations of $\theta_n$ around $\theta_0$. These findings are summarized in Figure \ref{fig:summary_combined}.

\section{Example: intercept-only logistic regression}\label{section:example}

To illustrate our results empirically, we consider a very simple intercept-only logistic regression model, that is, a Bernoulli model with logit link, with independent observations $X_i\sim\mathrm{Bernoulli}(p_0)$, $p_0=0.7$, and parameter $\theta\in\mathbb R$, with success probability $\sigma(\theta)=(1+e^{-\theta})^{-1}$. The empirical loss is the average negative log-likelihood
\begin{equation}
    \ell_n(\theta)=\frac{1}{n}\sum_{i=1}^n\left[\log\left(1+e^\theta\right)-X_i\theta\right]=\log(1+e^\theta)-\widehat p_n\theta,
\end{equation}
where $\widehat p_n=n^{-1}\sum_{i=1}^nX_i$, and the population loss is $\ell_{\mathbb{P}}(\theta)=\log(1+e^\theta)-p_0\theta$. On the event $\{0<\widehat p_n<1\}$, whose probability $1-p_0^n-(1-p_0)^n$ tends to one exponentially fast, the minimizer of $\ell_n$ is $\theta_n=\operatorname{logit}(\widehat{p}_n)$, while the population minimizer is $\theta_0=\operatorname{logit}(p_0)$. Since $\ell_n''(\theta)=\sigma(\theta)(1-\sigma(\theta))$, the corresponding curvatures are $H_n=\widehat p_n(1-\widehat p_n)$ and $H_0=p_0(1-p_0)$.

By the strong law of large numbers, we have $\theta_n\rightarrow\theta_0$ and $H_n\rightarrow H_0$ almost surely, and by the central limit theorem and the delta method,
\begin{equation}
        \sqrt n(\theta_n-\theta_0)
    \Rightarrow\mathcal N\left(0,H_0^{-1}\right).
\end{equation}

The bounded third derivative of the logistic loss ensures the
required local cubic remainder bound in Assumption \ref{ass:quadratic}. Strict convexity and
$\widehat p_n\to p_0$ also imply separation of the empirical loss from its minimum outside any fixed neighbourhood of $\theta_n$.
Together with a continuous, positive prior density, these verify Assumptions \ref{ass:quadratic}, \ref{ass:ident} and \ref{ass:prior} almost surely.

\begin{figure}[t]
    \centering

    \begin{subfigure}[t]{0.48\linewidth}
        \centering
        \includegraphics[width=\linewidth]{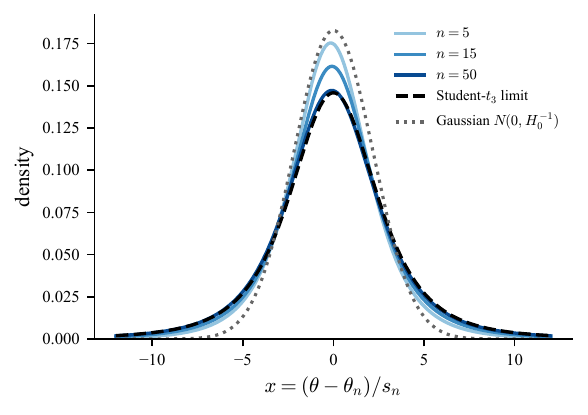}
        \caption{As $n$ increases, the density of $(\theta-\theta_n)/s_n$ approaches the predicted Student-$t_3$ limit.}
        \label{fig:first_alpha_leq}
    \end{subfigure}
    \hfill
    \begin{subfigure}[t]{0.48\linewidth}
        \centering
        \includegraphics[width=\linewidth]{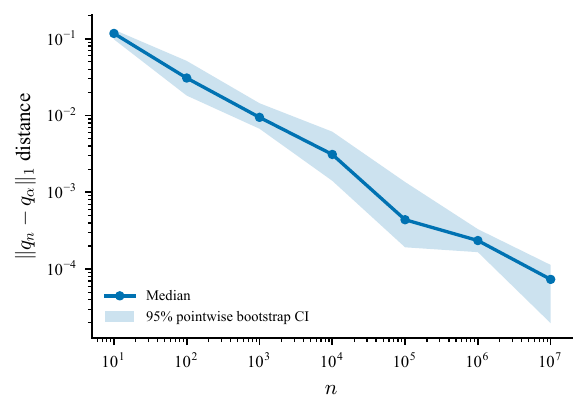}
        \caption{Median $L^1$ distance of $q_n$ to the limiting density across 12 independent sample paths.
        }
        \label{fig:second_alpha_leq}
    \end{subfigure}

    \caption{Convergence of the $s_n$-rescaled $\alpha$-divergence generalized posterior to a Student-$t$ distribution in the logistic regression example of Section \ref{section:example}, with $\alpha=0.5$, $d=1$, and $\eta=1$.}
    \label{fig:combined}
\end{figure}

\begin{figure}[t]
    \centering

    \begin{subfigure}[t]{0.48\linewidth}
        \centering
        \includegraphics[width=\linewidth]{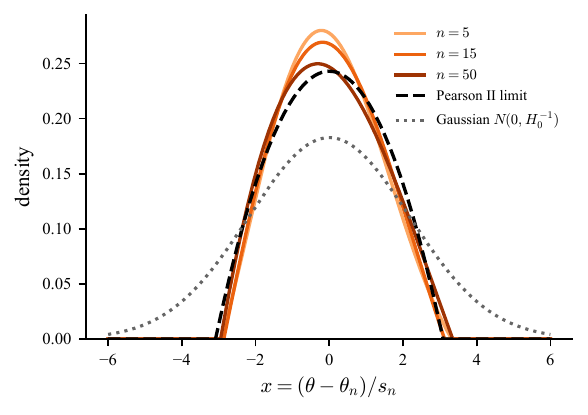}
        \caption{As $n$ increases, the density of $(\theta-\theta_n)/s_n$ approaches the predicted Pearson type-II limit.}
        \label{fig:first_alpha_geq}
    \end{subfigure}
    \hfill
    \begin{subfigure}[t]{0.48\linewidth}
        \centering
        \includegraphics[width=\linewidth]{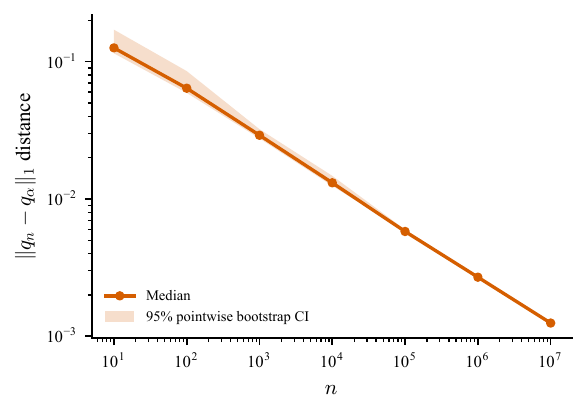}
        \caption{Median $L^1$ distance of $q_n$ to the limiting density across 12 independent sample paths.
        }
        \label{fig:second_alpha_geq}
    \end{subfigure}

    \caption{Convergence of the $s_n$-rescaled $\alpha$-divergence generalized posterior to a Pearson type-II distribution in the logistic regression example of Section \ref{section:example}, with $\alpha=2$, $d=1$, and $\eta=1$.}
    \label{fig:combined_pearson}
\end{figure}

We set $\eta=1$ and use a Student-$t_5$ prior centred at zero with scale parameter $2$.
For the experiments, we consider both $\alpha=0.5$ and $\alpha=2$; the latter corresponds to $D_2=\frac{1}{2}\chi^2$.
For each sample size and divergence, we evaluate the explicit generalized posterior $Q_n^\star$, determining $\delta_n$ numerically.
We then compute the density $q_n$ of $\frac{\theta-\theta_n}{s_n}$, $\theta\sim Q_n^\star$, with $s_n^2=\frac{\eta\delta_n}{n}$.
The limiting laws for the two choices of $\alpha$ are given by
\begin{equation}
    q_{1/2}\ \text{is the density of}\ t_3\left(0,\frac{4}{3H_0}\right)
    \quad\text{and}\quad
    q_2(x)=\frac34\sqrt{\frac{H_0}{2}}
        \left[1-\frac{H_0x^2}{2}\right]_+,
\end{equation}
respectively, where the second argument of the Student
distribution denotes its squared scale (the scale matrix of Theorem \ref{thm:main_a_leq} for $d=1$). The corresponding posterior widths then satisfy $s_n\asymp n^{-2/3}$ and $s_n\asymp n^{-1/3}$.

Figures \ref{fig:first_alpha_leq} and \ref{fig:first_alpha_geq} display the rescaled posterior densities at $n=5,15,50$, using prefixes of one simulated Bernoulli sequence.
We also plot the limiting densities together with $\mathcal N(0,H_0^{-1})$ for comparison. These illustrate how the finite-sample posterior shapes approach their distinct non-Gaussian limits.
To assess convergence across datasets, Figures \ref{fig:second_alpha_leq} and \ref{fig:second_alpha_geq} report median $L^1$ errors along twelve independent sample paths.
Finally, Figure \ref{fig:coverage_combined} examines frequentist coverage of equal-tailed $95\%$ credible intervals computed from the finite-sample posteriors. If $F_{n,k}$ denotes the posterior CDF given $k$ successes, the interval contains $\theta_0$ precisely when
\begin{equation}
    0.025\leq F_{n,k}(\theta_0)\leq0.975.
\end{equation}
We compute coverage exactly, summing over the sampling probabilities of $k\sim\mathrm{Binomial}(n,p_0)$ at different sample sizes. The Gibbs posterior with the same prior and $\eta=1$, that is, the ordinary Bayesian posterior, provides a baseline.

\begin{figure}[t]
    \centering

    \begin{subfigure}[t]{0.48\linewidth}
        \centering
        \includegraphics[width=\linewidth]{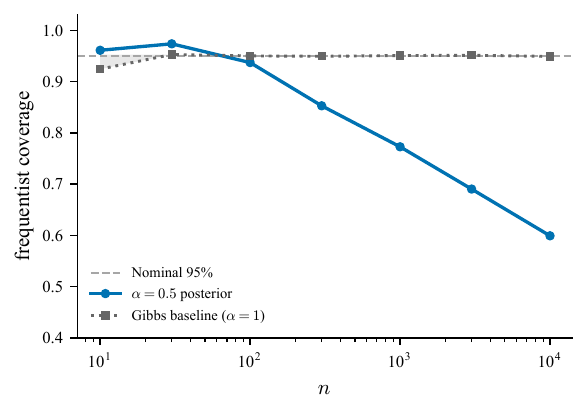}
        \caption{Frequentist coverage of the central $95\%$ $\alpha$-divergence GVI credible interval with $\alpha=0.5$.} 
        \label{fig:coverage_leq}
    \end{subfigure}
    \hfill
    \begin{subfigure}[t]{0.48\linewidth}
        \centering
        \includegraphics[width=\linewidth]{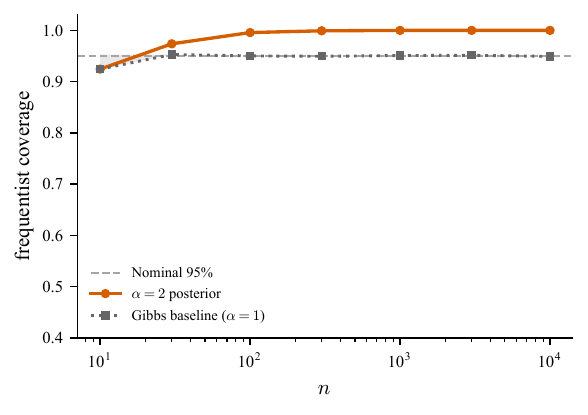}
        \caption{Frequentist coverage of the central $95\%$ $\alpha$-divergence GVI credible interval with $\alpha=2$.}
        \label{fig:coverage_geq}
    \end{subfigure}

    \caption{Frequentist coverage of equal-tailed 95\% posterior intervals for $\alpha=0.5$ (left) and $\alpha=2$ (right), with a Gibbs baseline ($\alpha=1$). The dashed line marks nominal coverage. The curves illustrate the undercoverage and overcoverage predicted by Corollaries \ref{cor:alphaleq1} and \ref{cor:coverage-alpha-gt1} respectively.}
    \label{fig:coverage_combined}
\end{figure}

The coverage curves illustrate the consequences of the scale mismatch established by our corollaries: for $\alpha=0.5$, posterior intervals become too narrow relative to the sampling fluctuations of their centre, whereas for $\alpha=2$ they become too wide. The undercoverage for $\alpha=0.5$ develops slowly, since $\sqrt{n}\,s_n\asymp n^{-1/6}$: the coverage is still about $0.6$ at $n=10^4$, and slightly above nominal for the smallest sample sizes. For $\alpha=2$, $\sqrt{n}\,s_n\asymp n^{1/6}$ and the coverage approaches one quickly. This occurs in a correctly specified model satisfying the regularity assumptions, illustrating that the choice of divergence alone can disrupt the frequentist interpretation of posterior uncertainty.

\section{Conclusions and future work}\label{sec:concl}

We have studied the asymptotic behaviour of unrestricted generalized variational inference posteriors regularized by an $\alpha$-divergence, with fixed regularization strength $\eta>0$. Under local quadraticity, identifiability, and prior regularity assumptions analogous to those used in classical Bernstein--von Mises theory, we show that the divergence determines both the local limiting shape and the concentration scale around the empirical loss minimizer.

For $\alpha\in(0,1)$ satisfying $2/(1-\alpha)>d$, the rescaled posterior converges in $L^1$ to a Student-$t$ distribution, and its width around $\theta_n$ shrinks faster than $n^{-1/2}$. For $\alpha>1$, the limiting distribution is a compactly supported Pearson type-II law, and the posterior width shrinks more slowly. In both cases the asymptotic width depends on the prior density at $\theta_0$, unlike in the KL case. As $\alpha\to 1$, both families of limiting densities converge in $L^1$ to the Gaussian law, while their concentration exponents approach $1/2$. Thus, within the regimes considered here, KL regularization is the unique member of the $\alpha$-divergence family that yields the usual posterior width exponent and Gaussian limit.

These differences have direct consequences for uncertainty quantification. When the empirical minimizer has the usual Gaussian sampling limit under $\sqrt n$ rescaling, its sampling fluctuations and the posterior width occur on different scales for every fixed $\alpha\neq 1$. For the classes of credible regions considered here, this mismatch leads to frequentist coverage tending to zero when $\alpha<1$ and to one when $\alpha>1$, even in regular, correctly specified models. These findings call for caution when using alternative divergences to construct GVI posteriors, since posterior concentration alone does not guarantee calibrated uncertainty quantification.

Several important questions remain open. For $\alpha\in(0,1)$, the regime $2\kappa\leq d$, where $\kappa=(1-\alpha)^{-1}$, behaves differently. A finer description of this setting is left for future work. Another direction is to allow sample-size or data-dependent regularization $\eta=\eta_n$, and to determine whether it can correct the scale mismatch and restore frequentist coverage. Extending the analysis to other $f$-divergences could also clarify which properties of the regularizer determine the limiting shape and concentration scale.
Finally, analysing the variational problem over a restricted family of distributions would help us connect the general theory to applications.

\subsection*{Acknowledgments}

The authors are grateful to Enric Alberola for the insightful and stimulating discussions that enriched this work.

\bibliographystyle{alpha}
\bibliography{refs}

\newpage

\appendix
\section{Proofs for the main results}\label{Appendix}

We first collect the local consequences of the assumptions that are used repeatedly in the following technical lemma.

\begin{lemma}\label{lem:local}
Suppose Assumptions \ref{ass:quadratic} and \ref{ass:ident} hold. There exist $\varepsilon_1\in(0,\varepsilon_0)$, constants $0<c\leq C<\infty$, $\rho>0$ and $N\in\mathbb{N}$ such that, for all $n\geq N$,
\begin{equation}
    c\|h\|^2\leq\Delta_n(\theta_n+h)\leq C\|h\|^2,\quad\text{for }\|h\|\leq\varepsilon_1.
\end{equation}
Furthermore, $ \Delta_n(\theta)\geq\rho\quad\text{for }\theta\notin B_{\varepsilon_1}(\theta_n).
$
In particular, for $n\geq N$, $\theta_n$ is the unique global minimizer of $\ell_n$ and $\Delta_n\geq0$. If, moreover, Assumption \ref{ass:prior} holds, $\varepsilon_1$ and $N$ can be chosen so that, in addition, $c_\pi\leq\pi(\theta)\leq M$ for all $\theta\in B_{\varepsilon_1}(\theta_n)$ and $n\geq N$, where $c_\pi:=\pi(\theta_0)/2$ and $M:=2\pi(\theta_0)$.
\end{lemma}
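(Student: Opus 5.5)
The plan is to work entirely from the expansion in Assumption \ref{ass:quadratic}. Writing $\Delta_n(\theta_n+h)=\frac12 h^\top H_nh+R_n(h)$ for $\|h\|<\varepsilon_0$, I will first control the quadratic part uniformly in $n$. Let $\lambda_0>0$ and $\Lambda_0$ be the smallest and largest eigenvalues of $H_0$. Since $H_n\to H_0$ in operator norm, Weyl's inequality gives an $N_1$ such that, for $n\geq N_1$, the eigenvalues of $H_n$ lie in $[\lambda_0/2,2\Lambda_0]$. Hence $\frac{\lambda_0}{4}\|h\|^2\leq\frac12h^\top H_nh\leq\Lambda_0\|h\|^2$. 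Adding the remainder bound $|R_n(h)|\leq c_0\|h\|^3\leq c_0\varepsilon_1\|h\|^2$ and choosing $\varepsilon_1<\min\{\varepsilon_0,\lambda_0/(8c_0)\}$, we obtain $c=\lambda_0/8$ and $C=\Lambda_0+\lambda_0/8$. This proves the two-sided quadratic bound for $\|h\|\leq\varepsilon_1$ and all $n\geq N_1$.

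Next comes the separation bound. I apply Assumption \ref{ass:ident} with $\varepsilon=\varepsilon_1$: the liminf of $\inf_{\theta\notin B_{\varepsilon_1}(\theta_n)}\Delta_n(\theta)$ is some $2\rho>0$. So there is an $N_2$ with $\Delta_n\geq\rho$ off $B_{\varepsilon_1}(\theta_n)$ for $n\geq N_2$. One subtlety is whether $B_{\varepsilon_1}$ denotes the open or the closed ball, so that the boundary sphere $\|h\|=\varepsilon_1$ is covered by one bound or the other. Either way the boundary is covered: the quadratic bound gives $\Delta_n\geq c\varepsilon_1^2>0$ there. If needed, I can replace $\rho$ by $\min\{\rho,c\varepsilon_1^2\}$.

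Combining the two bounds, for $n\geq\max\{N_1,N_2\}$ we get $\Delta_n(\theta)>0$ for every $\theta\neq\theta_n$. Since also $\Delta_n(\theta_n)=0$, the point $\theta_n$ is the unique global minimizer of $\ell_n$ and $\Delta_n\geq0$.

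For the prior statement I use continuity of $\pi$ at $\theta_0$ together with $\pi(\theta_0)>0$. This gives an $r>0$ with $\pi(\theta)\in[\pi(\theta_0)/2,2\pi(\theta_0)]$ on $B_r(\theta_0)$. I shrink $\varepsilon_1$ so that $\varepsilon_1\leq r/2$; shrinking preserves everything above, since $c$, $C$ and the inequality $c_0\varepsilon_1\leq\lambda_0/8$ only get better, and I reapply identifiability to choose $N_2$ for this new $\varepsilon_1$. Because $\theta_n\to\theta_0$, there is an $N_3$ with $\|\theta_n-\theta_0\|<r/2$ for $n\geq N_3$. Then $B_{\varepsilon_1}(\theta_n)\subseteq B_r(\theta_0)$, and the bounds $c_\pi\leq\pi\leq M$ follow. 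I then take $N=\max\{N_1,N_2,N_3\}$.

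The only real obstacle is bookkeeping the order of choices. The prior constraint on $\varepsilon_1$ must be fixed first, then $\rho$ and $N_2$ are chosen from identifiability at that final $\varepsilon_1$, so that all constants are uniform in $n$. No step requires anything beyond eigenvalue perturbation and the definitions.
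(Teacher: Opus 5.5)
Your proposal is correct and follows the paper's proof essentially step for step. It uses the same eigenvalue perturbation of $H_n$ around $H_0$ with $c_0\varepsilon_1\le\lambda_0/8$, giving the identical constants $c=\lambda_0/8$ and $C=\Lambda_0+\lambda_0/8$, then Assumption~\ref{ass:ident} at $\varepsilon=\varepsilon_1$, and the same order of choices for the prior bound (shrink $\varepsilon_1$ via continuity of $\pi$ at $\theta_0$, recompute $\rho$ and $N_2$, then take $n$ large so that $\theta_n$ is close to $\theta_0$), the only nit being that the $\liminf$ in Assumption~\ref{ass:ident} may be $+\infty$, in which case take $\rho$ to be any finite positive number below it rather than half of it.
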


\begin{proof}
Let $\lambda>0$ and $\Lambda<\infty$ be the smallest and largest eigenvalues of $H_0$. Since $H_n\to H_0$, there is $N_1$ such that $\lambda_{\min}(H_n)\geq\lambda/2$ and $\|H_n\|\leq2\Lambda$ for $n\geq N_1$. Choose $\varepsilon_1\in(0,\varepsilon_0)$ with $c_0\varepsilon_1\leq\lambda/8$. For $\|h\|\leq\varepsilon_1$, Assumption \ref{ass:quadratic} gives $\Delta_n(\theta_n+h)=\frac{1}{2}h^\top H_nh+R_n(h)$ with $|R_n(h)|\leq c_0\|h\|^3\leq\frac{\lambda}{8}\|h\|^2$, so that $\frac{\lambda}{8}\|h\|^2\leq\Delta_n(\theta_n+h)\leq(\Lambda+\frac{\lambda}{8})\|h\|^2$ for $n\geq N_1$. Assumption \ref{ass:ident} with $\varepsilon=\varepsilon_1$ provides $\rho>0$ and $N_2$ such that $\Delta_n\geq\rho$ outside $B_{\varepsilon_1}(\theta_n)$ for $n\geq N_2$; take $N=\max\{N_1,N_2\}$. For the last claim, use continuity of $\pi$ at $\theta_0$ to reduce $\varepsilon_1$ so that $\pi(\theta_0)/2\leq\pi\leq2\pi(\theta_0)$ on $B_{2\varepsilon_1}(\theta_0)$, recompute $\rho$ and $N_2$ for the new $\varepsilon_1$, and increase $N$ so that $\|\theta_n-\theta_0\|<\varepsilon_1$, hence $B_{\varepsilon_1}(\theta_n)\subseteq B_{2\varepsilon_1}(\theta_0)$, for $n\geq N$.
\end{proof}

Throughout the proofs, $n\geq N$ and $\varepsilon_1,c,C,\rho,c_\pi,M$ are as in Lemma \ref{lem:local}.

\subsection*{Proof of Proposition \ref{prop:exist<1}}
\begin{proof}
We first prove the existence of the  normalizer $\delta_n$. Define
\begin{equation}
    F_n(\delta)=\int_{\mathbb{R}^d}\left[\delta+\frac{(1-\alpha)n}{\eta}\Delta_n(\theta)\right]^{-\kappa}d\Pi(\theta),\qquad\delta>0.
\end{equation}

Since $\Delta_n\geq0$, the integrand is bounded by $\delta^{-\kappa}$ and strictly decreasing in $\delta$; hence $F_n$ is finite, continuous and strictly decreasing, with
$F_n(\delta)\to0$ as $\delta\to\infty$. By Lemma \ref{lem:local}, we have that $\Delta_n(\theta_n+h)\leq C\|h\|^2$ and $\pi(\theta_n+h)\geq c_\pi>0$ for $\|h\|\leq\varepsilon_1$, hence by the monotone convergence theorem

\begin{equation}
    F_n(\delta)\rightarrow F_n(0)\geq c_\pi\left[\frac{(1-\alpha)nC}{\eta}\right]^{-\kappa}\int_{\|h\|\leq\varepsilon_1}\|h\|^{-2\kappa}\,dh=\infty\qquad\text{as }\delta\downarrow 0,
\end{equation}
because $\int_{\|h\|\leq\varepsilon_1}\|h\|^{-2\kappa}\,dh$ diverges when $2\kappa\geq d$. Thus, there is a unique $\delta_n>0$ such that
$F_n(\delta_n)=1$.

Now, write $r=dQ/d\Pi$. Up to the constant $n\ell_n(\theta_n)$ and the factor $n$, the minimization objective (\ref{eq:GVI_problem}) is
\begin{equation}
\mathcal J_n(r)=n\int_{\mathbb{R}^d} \Delta_n r\,d\Pi+\eta\int_{\mathbb{R}^d} f_\alpha(r)\,d\Pi.
\end{equation}

Let
\begin{equation}
    r_n(\theta)=\left[\delta_n+\frac{(1-\alpha)n}{\eta}\Delta_n(\theta)\right]^{-\kappa}.
\end{equation}

Since $f_\alpha'(t)=\frac{t^{\alpha-1}-1}{\alpha-1}$,
$r_n$ satisfies the first-order optimality condition
\begin{equation}\label{eq:foc<1}
    n\Delta_n(\theta)+\eta f_\alpha'(r_n(\theta))=\lambda_n:=\frac{\eta(1-\delta_n)}{1-\alpha}.
\end{equation}

Hence, by convexity of $f_\alpha$, for every
probability density $r$ with $\mathcal J_n(r)<\infty$,
\begin{equation}
    \mathcal J_n(r)-\mathcal J_n(r_n)\geq\int_{\mathbb{R}^d}\left[n\Delta_n+\eta f_\alpha'(r_n)\right](r-r_n)\,d\Pi=\lambda_n\int_{\mathbb{R}^d}(r-r_n)\,d\Pi=0.
\end{equation}
Strict convexity of $f_\alpha$ gives uniqueness.
\end{proof}

\subsection*{Proof of Theorem \ref{thm:main_a_leq}}
\begin{proof}
    Recall that $\Delta_n(\theta)=\ell_n(\theta)-\ell_n(\theta_n)$.
    We first show that $\delta_n\rightarrow 0$. Otherwise, along a
    subsequence, $\delta_n\geq\delta_*>0$. Fix
    $\varepsilon\in(0,\varepsilon_1]$. By Assumption \ref{ass:ident},
    there exists $\rho_\varepsilon>0$ such that, for all
    sufficiently large $n$, $\Delta_n(\theta)\geq\rho_\varepsilon$ on $B_\varepsilon(\theta_n)^c$.

    Splitting the normalization integral therefore gives
    \begin{equation}
         1\leq\delta_*^{-\kappa}\Pi(B_\varepsilon(\theta_n))+
        \left[
            \frac{(1-\alpha)n\rho_\varepsilon}{\eta}
        \right]^{-\kappa}.
    \end{equation}

    Since $\theta_n\to\theta_0$, eventually
    $B_\varepsilon(\theta_n)\subseteq B_{2\varepsilon}(\theta_0)$.
    Taking the limit along the subsequence yields
    \begin{equation}
        1\leq\delta_*^{-\kappa}\Pi(B_{2\varepsilon}(\theta_0)).
    \end{equation}

    Letting $\varepsilon\rightarrow 0$ gives a contradiction
    because $\Pi$ has a density. Thus $\delta_n\to 0$,
    and consequently $s_n\to0$.
    
    \vskip 0.5cm

    Now consider the change of variables $\theta=\theta_n+s_nx$. By the definition of $s_n$, we have
    \begin{equation}\label{eq:change_var}
        q_n(x)=s^d_n\delta_n^{-\kappa}\pi(\theta_n+s_nx)\left[1+(1-\alpha)\frac{\Delta_n(\theta_n+s_nx)}{s^2_n}\right]^{-\kappa}.
    \end{equation}
    Assumption \ref{ass:quadratic} then implies that, for $\|s_nx\|<\varepsilon_0$,
    \begin{equation}
        \frac{\Delta_n(\theta_n+s_nx)}{s_n^2}=\frac{1}{2}x^\top H_nx+\frac{R_n(s_nx)}{s^2_n},
    \end{equation}
    and by the cubic bound on $R_n$ we have, for every $x\in\mathbb{R}^d$,
    \begin{equation}
        \frac{|R_n(s_nx)|}{s_n^2}\leq c_0 s_n\|x\|^3\underset{n\rightarrow\infty}{\longrightarrow} 0.
    \end{equation}
    Hence combining the observations above with Assumption \ref{ass:prior} we have, for every fixed $x$,
    \begin{equation}\label{eq:density_limit}
        \pi(\theta_n+s_nx)\left[1+(1-\alpha)\frac{\Delta_n(\theta_n+s_nx)}{s^2_n}\right]^{-\kappa}\underset{n\rightarrow\infty}{\longrightarrow} \pi(\theta_0)\left[1+\frac{(1-\alpha)}{2}x^\top H_0x\right]^{-\kappa}.
    \end{equation}

    We now determine $s^d_n\delta_n^{-\kappa}$. The strategy will be similar to that in Theorem 4 of \cite{miller2021asymptotic}.

    First of all, since $q_n(x)$ is a density, we have $\int q_n(x)dx=1$, hence
    \begin{equation}\label{eq:decomposition}
    \begin{aligned}
        s^{-d}_n\delta^\kappa_n&=\int_{\mathbb{R}^d}\pi(\theta_n+s_nx)\left[1+(1-\alpha)\frac{\Delta_n(\theta_n+s_nx)}{s^2_n}\right]^{-\kappa}dx\\
        &=\int_{\|s_n x\|\leq \varepsilon_1}\pi(\theta_n+s_nx)\left[1+(1-\alpha)\frac{\Delta_n(\theta_n+s_nx)}{s^2_n}\right]^{-\kappa}dx\\ &+ \int_{\|s_n x\|>\varepsilon_1}\pi(\theta_n+s_nx)\left[1+(1-\alpha)\frac{\Delta_n(\theta_n+s_nx)}{s^2_n}\right]^{-\kappa}dx.
    \end{aligned}
    \end{equation}

    For the first term, Lemma \ref{lem:local} gives $\Delta_n(\theta_n+s_nx)\geq cs^2_n\|x\|^2$, therefore 
    \begin{equation}
                \mathbf{1}_{\{\|s_nx\|\leq\varepsilon_1\}}\pi(\theta_n+s_nx)\left[1+(1-\alpha)\frac{\Delta_n(\theta_n+s_nx)}{s_n^2}\right]^{-\kappa}
        \leq M(1+c(1-\alpha)\|x\|^2)^{-\kappa},
    \end{equation}
    where we also used that $\pi$ is bounded by $M$ on $B_{\varepsilon_1}(\theta_n)$. The right-hand side of the inequality is integrable because $2\kappa> d$, so the dominated convergence theorem applies and we have
    \begin{multline}
        \int_{\|s_n x\|\leq \varepsilon_1}\pi(\theta_n+s_nx)\left[1+(1-\alpha)\frac{\Delta_n(\theta_n+s_nx)}{s^2_n}\right]^{-\kappa}dx\\
        \underset{n\rightarrow\infty}{\longrightarrow}\pi(\theta_0)\int_{\mathbb{R}^d}\left[1+\frac{(1-\alpha)}{2}x^\top H_0x\right]^{-\kappa}dx.
    \end{multline}

    We now control the second term in (\ref{eq:decomposition}):

    \begin{align} \int_{\|s_nx\|>\varepsilon_1} \pi(\theta_n+s_nx) \left[ 1+ (1-\alpha)\frac{\Delta_n(\theta_n+s_nx)}{s_n^2} \right]^{-\kappa}dx
    &\leq \left(1+(1-\alpha)\frac{\rho}
    {s_n^2}\right)^{-\kappa}\int_{\mathbb{R}^d}\pi(\theta_n+s_nx)dx\\ &= s_n^{-d}
    \left(1+(1-\alpha)\frac{\rho}{s_n^2}\right)^{-\kappa}\underset{n\rightarrow\infty}{\longrightarrow} 0,
    \end{align}

    where the inequality is an application of Lemma \ref{lem:local}, the equality follows from undoing the change of variables inside the integral, and the limit follows from $2\kappa>d$, since the right-hand side is at most $((1-\alpha)\rho)^{-\kappa}s_n^{2\kappa-d}$.

    In conclusion, combining both limits in (\ref{eq:decomposition}) we obtain
    \begin{equation}\label{eq:constant}
        s^{-d}_n\delta^\kappa_n\underset{n\rightarrow\infty}{\longrightarrow} \pi(\theta_0)\int_{\mathbb{R}^d}\left[1+\frac{(1-\alpha)}{2}x^\top H_0x\right]^{-\kappa}dx=C_\alpha.
    \end{equation}

    Finally, combining (\ref{eq:constant}) and (\ref{eq:density_limit}) with (\ref{eq:change_var}) we obtain $q_n\rightarrow q$ pointwise. Since $q_n$ and $q$ are probability densities, Scheffé's lemma provides the $L^1(\mathbb{R}^d)$ convergence.

    For the first claim, substitute $\delta_n=ns_n^2/\eta$ in (\ref{eq:constant}): since $s_n^{-d}\delta_n^\kappa=s_n^{2\kappa-d}(n/\eta)^\kappa$, we obtain $s_n^{2\kappa-d}(n/\eta)^\kappa\rightarrow C_\alpha$, and hence $\delta_n\asymp n^{-d/(2\kappa-d)}$ and $s_n\asymp n^{-\kappa/(2\kappa-d)}$.
\end{proof}

\subsection*{Proof of Corollary \ref{cor:alphaleq1}}

\begin{proof}

For the first part, note that
\begin{equation}
    |Q^\star_n(C_n)-(1-\beta)|\leq\int_{A_n}|q_n(x)-q(x)|dx+\left|\int_{A_n}q(x)dx-(1-\beta)\right|\longrightarrow 0
\end{equation}
in probability, where the limit follows from Theorem \ref{thm:main_a_leq} and the assumption on $A_n$.
Now, on the event $\{A_n\subseteq B_M(0)\}$, $\theta_0\in C_n$ implies $\|\theta_n-\theta_0\|\leq Ms_n$. Hence
\begin{equation}    
\mathbb{P}(\theta_0\in C_n)\leq\mathbb{P}(A_n\not\subseteq B_M(0))+\mathbb{P}\left(\|\sqrt{n}(\theta_n-\theta_0)\|\leq M\sqrt{n}s_n\right).
\end{equation}

Therefore, for every $\varepsilon>0$,
\begin{equation}\label{eq:inter_corol}
\begin{split}
\mathbb{P}(\theta_0\in C_n)\leq{}&\mathbb{P}(A_n\not\subseteq B_M(0))\\
&+\mathbb{P}\left(\|\sqrt{n}(\theta_n-\theta_0)\|\leq \varepsilon\right) + \mathbb{P}\left(M\sqrt{n}s_n > \varepsilon
\right).
\end{split}
\end{equation}

By Theorem \ref{thm:main_a_leq}, $s_n\asymp n^{-\kappa/(2\kappa-d)},$
and since $2\kappa>d$, we have $\sqrt{n}s_n\asymp n^{-d/(2(2\kappa-d))}\longrightarrow 0$. This in turn makes the last term in \eqref{eq:inter_corol} vanish, the first one vanishes by assumption, and taking $\limsup_{n\to\infty}$, Portmanteau's theorem gives
\begin{equation}
    \limsup_{n}\,\mathbb{P}(\theta_0\in C_n)\leq\mathbb{P}(\|Z\|\leq \varepsilon).
\end{equation}
Finally, letting $\varepsilon\to 0$ and using $\mathbb{P}(Z=0)=0$ proves the second claim.
\end{proof}

\subsection*{Proof of Proposition \ref{prop:exist>1}}
\begin{proof}
    We first prove the existence of the normalizer
    $\delta_n$. For all sufficiently large $n$,
    Lemma \ref{lem:local}
    implies that $\Delta_n\geq0$. Define
    \begin{equation}
        F_n(\delta)
        =
        \int_{\mathbb{R}^d}
        \left[
            \delta-\frac{(\alpha-1)n}{\eta}
            \Delta_n(\theta)
        \right]^p_+d\Pi(\theta),
        \qquad \delta>0.
    \end{equation}

    Since $\Delta_n\geq 0$, the integrand is bounded above by $\delta^p$. Thus $F_n$ is finite and continuous by the dominated convergence theorem, and
    \begin{equation}
        0\leq F_n(\delta)\leq\delta^p
        \underset{\delta\rightarrow0}{\longrightarrow}0.
    \end{equation}

    By Lemma \ref{lem:local}, we have
    $\Delta_n(\theta_n+h)\leq C\|h\|^2$ and
    $\pi(\theta_n+h)\geq c_\pi>0$ whenever
    $\|h\|\leq\varepsilon_1$. Consequently,
    \begin{equation}
        F_n(\delta)\geq\Pi(B_{\varepsilon_1}(\theta_n))
        \left[
            \delta-\frac{(\alpha-1)n}{\eta}
            C\varepsilon_1^2
        \right]^p_+\underset{\delta\rightarrow\infty}{\longrightarrow}\infty.
    \end{equation}

    Moreover, $F_n$ is strictly increasing. Indeed, for
    any $0<\delta_1<\delta_2$, the integrand is
    nondecreasing everywhere and strictly increasing
    on a sufficiently small ball around $\theta_n$
    where
    \begin{equation}
        \frac{(\alpha-1)n}{\eta}\Delta_n(\theta)
        <\delta_1.
    \end{equation}
    This ball has positive prior mass. Thus there is a
    unique $\delta_n>0$ such that $F_n(\delta_n)=1$.

    Now, write $r=dQ/d\Pi$. Up to the constant
    $n\ell_n(\theta_n)$ and the factor $n$, the minimization objective
    (\ref{eq:GVI_problem}) is
    \begin{equation}
        \mathcal J_n(r)
        =
        n\int_{\mathbb{R}^d}\Delta_n r\,d\Pi
        +
        \eta\int_{\mathbb{R}^d}f_\alpha(r)\,d\Pi.
    \end{equation}
    Let
    \begin{equation}
        r_n(\theta)
        =
        \left[
            \delta_n-\frac{(\alpha-1)n}{\eta}
            \Delta_n(\theta)
        \right]^p_+.
    \end{equation}
    This density has finite objective: $r_n$ is bounded,
    $\Delta_n$ is bounded on $\{r_n>0\}$, and $f_\alpha$
    is finite and continuous on $[0,\delta_n^p]$.

    Since
    $f_\alpha'(t)=(t^{\alpha-1}-1)/(\alpha-1)$,
    including the right derivative at $t=0$, we have
    \begin{equation}
        n\Delta_n(\theta)+\eta f_\alpha'(r_n(\theta))
        =
        \frac{\eta(\delta_n-1)}{\alpha-1}
        \qquad\text{on }\{r_n>0\}.
    \end{equation}
    On the other hand, on $\{r_n=0\}$,
    \begin{equation}
        n\Delta_n(\theta)
        \geq\frac{\eta\delta_n}{\alpha-1},
    \end{equation}
    and therefore
    \begin{equation}
        n\Delta_n(\theta)+\eta f_\alpha'(0)
        =
        n\Delta_n(\theta)-\frac{\eta}{\alpha-1}
        \geq\frac{\eta(\delta_n-1)}{\alpha-1}.
    \end{equation}

    Hence, by convexity of $f_\alpha$, for every
    probability density $r$ with finite objective,
    \begin{equation}
        \begin{aligned}
            \mathcal J_n(r)-\mathcal J_n(r_n)
            &\geq
            \int_{\mathbb{R}^d}
            \left[
                n\Delta_n+\eta f_\alpha'(r_n)
            \right](r-r_n)\,d\Pi\\
            &=
            \frac{\eta(\delta_n-1)}{\alpha-1}
            \int_{\mathbb{R}^d}(r-r_n)\,d\Pi
            +
            \int_{\{r_n=0\}}
            \left[
                n\Delta_n-\frac{\eta\delta_n}{\alpha-1}
            \right]r\,d\Pi
            \geq0,
        \end{aligned}
    \end{equation}
    where we used that both densities integrate to one
    and that the last integrand is nonnegative.
    Strict convexity of $f_\alpha$ gives uniqueness.
\end{proof}

\subsection*{Proof of Theorem \ref{thm:bvm-alpha-gt1}}
\begin{proof}
    The structure of the proof is the same as Theorem \ref{thm:main_a_leq}, but we need to take into account the different kernel in this posterior.  We first show that $\delta_n/n\rightarrow 0$. Otherwise,
    along a subsequence, $\delta_n\geq an$ for some $a>0$.
    By Lemma \ref{lem:local},
    \begin{equation}
        \Delta_n(\theta_n+h)\leq C\|h\|^2\qquad\text{for }\|h\|\leq\varepsilon_1.
    \end{equation}
    Fix $\varepsilon\in(0,\varepsilon_1]$ such that
    $\frac{(\alpha-1)C\varepsilon^2}{\eta}\leq\frac{a}{2}$.
    Restricting the normalization integral to
    $B_\varepsilon(\theta_n)$ therefore gives
    \begin{equation}
            1
            =\int_{\mathbb{R}^d}
            \left[
                \delta_n-\frac{(\alpha-1)n}{\eta}
                \Delta_n(\theta)
            \right]^p_+d\Pi(\theta)
            \geq
            \Pi(B_\varepsilon(\theta_n))
            \left(\frac{an}{2}\right)^p.
    \end{equation}
    Since $\theta_n\to\theta_0$, Assumption \ref{ass:prior}
    implies that $\Pi(B_\varepsilon(\theta_n))$ is bounded
    below by a positive constant for all sufficiently large
    $n$. The right-hand side therefore tends to infinity,
    a contradiction. Thus $\delta_n/n\to0$, and consequently
    $s_n\to0$.

    \vskip 0.5cm

    Now consider the change of variables
    $\theta=\theta_n+s_nx$. By the definition of $s_n$,
    we have
    \begin{equation}\label{eq:change_var_gt}
        q_n(x)=s_n^d\delta_n^p\pi(\theta_n+s_nx)
        \left[
            1-(\alpha-1)
            \frac{\Delta_n(\theta_n+s_nx)}{s_n^2}
        \right]^p_+.
    \end{equation}
    Assumption \ref{ass:quadratic} then implies that, for $\|s_nx\|<\varepsilon_0$,
    \begin{equation}
        \frac{\Delta_n(\theta_n+s_nx)}{s_n^2}
        =\frac{1}{2}x^\top H_nx
        +\frac{R_n(s_nx)}{s_n^2},
    \end{equation}
    and by the cubic bound on $R_n$ we have, for every
    $x\in\mathbb{R}^d$,
    \begin{equation}
        \frac{|R_n(s_nx)|}{s_n^2}
        \leq c_0s_n\|x\|^3
        \underset{n\rightarrow\infty}{\longrightarrow}0.
    \end{equation}
    Hence, combining the observations above with
    Assumption \ref{ass:prior} we have, for every fixed $x$,
    \begin{equation}\label{eq:density_limit_gt}
        \pi(\theta_n+s_nx)
        \left[
            1-(\alpha-1)
            \frac{\Delta_n(\theta_n+s_nx)}{s_n^2}
        \right]^p_+
        \underset{n\rightarrow\infty}{\longrightarrow}
        \pi(\theta_0)
        \left[
            1-\frac{\alpha-1}{2}x^\top H_0x
        \right]^p_+.
    \end{equation}

    We now determine $s_n^d\delta_n^p$.
    First of all, since $q_n(x)$ is a density, we have
    $\int q_n(x)dx=1$, hence
    \begin{equation}\label{eq:decomposition_gt}
        \begin{aligned}
            s_n^{-d}\delta_n^{-p}
            &=\int_{\mathbb{R}^d}
            \pi(\theta_n+s_nx)
            \left[
                1-(\alpha-1)
                \frac{\Delta_n(\theta_n+s_nx)}{s_n^2}
            \right]^p_+dx\\
            &=\int_{\|s_nx\|\leq\varepsilon_1}
            \pi(\theta_n+s_nx)
            \left[
                1-(\alpha-1)
                \frac{\Delta_n(\theta_n+s_nx)}{s_n^2}
            \right]^p_+dx\\
            &\quad+\int_{\|s_nx\|>\varepsilon_1}
            \pi(\theta_n+s_nx)
            \left[
                1-(\alpha-1)
                \frac{\Delta_n(\theta_n+s_nx)}{s_n^2}
            \right]^p_+dx.
        \end{aligned}
    \end{equation}

    For the first term, Lemma \ref{lem:local} gives
    $\Delta_n(\theta_n+s_nx)\geq cs_n^2\|x\|^2$
    on $\{\|s_nx\|\leq\varepsilon_1\}$, therefore
    \begin{equation}
        \mathbf{1}_{\{\|s_nx\|\leq\varepsilon_1\}}
        \pi(\theta_n+s_nx)
        \left[
            1-(\alpha-1)
            \frac{\Delta_n(\theta_n+s_nx)}{s_n^2}
        \right]^p_+
        \leq M\left[1-c(\alpha-1)\|x\|^2\right]^p_+,
    \end{equation}
    where we also used that $\pi$ is bounded by $M$
    on $B_{\varepsilon_1}(\theta_n)$. The right-hand side
    is bounded and compactly supported, hence integrable.
    Thus the dominated convergence theorem applies and
    we have
    \begin{equation}
            \int_{\|s_nx\|\leq\varepsilon_1}
            \pi(\theta_n+s_nx)
            \left[
                1-(\alpha-1)
                \frac{\Delta_n(\theta_n+s_nx)}{s_n^2}
            \right]^p_+dx\underset{n\rightarrow\infty}
            {\longrightarrow}
            \pi(\theta_0)\int_{\mathbb{R}^d}
            \left[
                1-\frac{\alpha-1}{2}x^\top H_0x
            \right]^p_+dx.
    \end{equation}

    We now control the second term in
    (\ref{eq:decomposition_gt}).
    By Lemma \ref{lem:local},
    $\Delta_n(\theta)\geq\rho$
    on $B_{\varepsilon_1}(\theta_n)^c$. Since $s_n\to0$,
    \begin{equation}
        1-\frac{(\alpha-1)\rho}{s_n^2}<0
    \end{equation}
    for all sufficiently large $n$. Consequently,
    \begin{equation}
        \int_{\|s_nx\|>\varepsilon_1}
        \pi(\theta_n+s_nx)
        \left[
            1-(\alpha-1)
            \frac{\Delta_n(\theta_n+s_nx)}{s_n^2}
        \right]^p_+dx=0
    \end{equation}
    for all sufficiently large $n$.

    In conclusion, combining both limits in
    (\ref{eq:decomposition_gt}) we obtain
    \begin{equation}\label{eq:constant_gt}
        s_n^{-d}\delta_n^{-p}
        \underset{n\rightarrow\infty}{\longrightarrow}
        \pi(\theta_0)\int_{\mathbb{R}^d}
        \left[
            1-\frac{\alpha-1}{2}x^\top H_0x
        \right]^p_+dx=C_\alpha,
    \end{equation}
    where the limit is finite and strictly positive.

    Finally, combining (\ref{eq:constant_gt}) and
    (\ref{eq:density_limit_gt}) with
    (\ref{eq:change_var_gt}) we obtain $q_n\rightarrow q$
    pointwise. Since $q_n$ and $q$ are probability
    densities, Scheff\'e's lemma provides the
    $L^1(\mathbb{R}^d)$ convergence.

    For the first claim, substituting $\delta_n=ns_n^2/\eta$ gives
    \begin{equation}
        s_n^{-d}\delta_n^{-p}
        =s_n^{-(2p+d)}\left(\frac{n}{\eta}\right)^{-p},
    \end{equation}
    so that $s_n^{2p+d}(n/\eta)^p\rightarrow C_\alpha^{-1}$. Hence
    $\delta_n\asymp n^{d/(2p+d)}$ and $s_n\asymp n^{-p/(2p+d)}$.
\end{proof}

\subsection*{Proof of Corollary \ref{cor:coverage-alpha-gt1}}
\begin{proof}
For the first part, note that
\begin{equation}
    \left|Q_n^\star(C_n)-(1-\beta)\right|
    \leq
    \int_{A_n}|q_n(x)-q(x)|\,dx
    +\left|\int_{A_n}q(x)\,dx-(1-\beta)\right|
    \longrightarrow0\ \text{in probability},
\end{equation}
where the limit follows from
Theorem~\ref{thm:bvm-alpha-gt1} and the assumption on $A_n$.
Now, on the event $\{\overline B_r(0)\subseteq A_n\}$,
$\theta_0\notin C_n$ implies $\|\theta_n-\theta_0\|>rs_n$. Hence
\begin{equation}
    \mathbb P(\theta_0\notin C_n)
    \leq
    \mathbb P(\overline B_r(0)\not\subseteq A_n)+\mathbb P\left(
        \|\sqrt n(\theta_n-\theta_0)\|>r\sqrt n\,s_n
    \right).
\end{equation}

By Theorem~\ref{thm:bvm-alpha-gt1},
$s_n\asymp n^{-p/(2p+d)}$, and since $d\geq1$,
\begin{equation}
    \sqrt n\,s_n
    \asymp n^{d/(2(2p+d))}
    \longrightarrow\infty.
\end{equation}
Therefore, for every $M>0$,
\begin{align}
    \mathbb P(\theta_0\notin C_n)
    &\leq
    \mathbb P(\overline B_r(0)\not\subseteq A_n)+\mathbb P\left(
        \|\sqrt n(\theta_n-\theta_0)\|>M
    \right)
    +\mathbb P\left(r\sqrt n\,s_n\leq M\right).
\end{align}
Taking $\limsup_{n\to\infty}$ and applying the Portmanteau theorem to the closed set $\{\|z\|\geq M\}$ gives
\begin{equation}
    \limsup_{n\to\infty}
    \mathbb P(\theta_0\notin C_n)
    \leq \mathbb P(\|Z\|\geq M).
\end{equation}
Letting $M\to\infty$ proves the claim.
\end{proof}

\subsection*{Proof of Proposition \ref{prop:alpha_to_one}}
\begin{proof}
    Write $z(x)=x^\top H_0x/2$. The unnormalized kernels
    of $q_\alpha$ are
    \begin{equation}
        K_\alpha(x)=
        \begin{cases}
            (1+z(x)/\kappa)^{-\kappa},
            & \alpha<1,\quad \kappa=(1-\alpha)^{-1},\\
            [1-z(x)/p]^p_+,
            & \alpha>1,\quad p=(\alpha-1)^{-1}.
        \end{cases}
    \end{equation}
    Both converge pointwise to $e^{-z(x)}$ as
    $\alpha\rightarrow 1$.

    For $\alpha<1$, fix $\kappa_0>d/2$. Whenever
    $\kappa\geq\kappa_0$,
    \begin{equation}
        K_\alpha(x)
        \leq(1+z(x)/\kappa_0)^{-\kappa_0},
    \end{equation}
    which is integrable. For $\alpha>1$, we have
    $K_\alpha(x)\leq e^{-z(x)}$.
    Dominated convergence therefore gives
    \begin{equation}\label{eq:kernel-integral}
        \int_{\mathbb{R}^d}K_\alpha(x)\,dx
        \underset{\alpha\rightarrow1}{\longrightarrow}
        \int_{\mathbb{R}^d}e^{-z(x)}\,dx
        =\frac{(2\pi)^{d/2}}{\sqrt{\det H_0}}.
    \end{equation}
    Consequently, the normalized densities converge
    pointwise to $\varphi_{H_0}$, and Scheff\'e's lemma
    yields the $L^1(\mathbb{R}^d)$ convergence.
    Finally,
    \begin{equation}
        \frac{\kappa}{2\kappa-d}
        =\frac{1}{2+d(\alpha-1)}
        \quad(\alpha<1),
        \qquad
        \frac{p}{2p+d}
        =\frac{1}{2+d(\alpha-1)}
        \quad(\alpha>1),
    \end{equation}
    proving the assertion about the concentration exponents.
\end{proof}

\end{document}